\definecolor{darkblue}{RGB}{0,0,160}
\newcommand{\excise}[1]{}
\newtheorem{thm}{Theorem}[section]
\newtheorem{lemma}[thm]{Lemma}
\newtheorem{cor}[thm]{Corollary}
\newtheorem{prop}[thm]{Proposition}
\newtheorem{question}[thm]{Question}
\newtheorem{prob}[thm]{Problem}
\theoremstyle{definition}
\newtheorem{example}[thm]{Example}
\newtheorem{remark}[thm]{Remark}
\newtheorem{defn}[thm]{Definition}
\numberwithin{equation}{section}
\newcommand{\ring}[1]{\ensuremath{\mathbb{#1}}}
\renewcommand\>{\rangle}
\newcommand\<{\langle}
\newcommand\NN{\ring{N}}
\newcommand\kk{\Bbbk}
\newcommand\mm{{\mathfrak m}}
\newcommand\pp{{\mathfrak p}}
\newcommand\ow{\ol w}
\newcommand\oR{\hspace{.3ex}\ol{\hspace{-.3ex}R\hspace{-.05ex}}\hspace{.05ex}}
\newcommand\oQ{\hspace{.15ex}\ol{\hspace{-.15ex}Q\hspace{-.25ex}}\hspace{.25ex}}
\newcommand\oW{\hspace{.1ex}\overline{\hspace{-.1ex}W\hspace{-.1ex}}\hspace{.1ex}{}}
\newcommand\ttt{\mathbf{t}}
\newcommand\app{\mathord\approx}
\newcommand\nil{\infty}
\newcommand\til{\mathord\sim}
\newcommand\into{\hookrightarrow}
\newcommand\minus{\smallsetminus}
\newcommand\nothing{\varnothing}
\renewcommand\iff{\Leftrightarrow}
\renewcommand\implies{\Rightarrow}
\def\ol#1{{\overline {#1}}}
\DeclareMathOperator\ass{Ass} 
\DeclareMathOperator\soc{soc} 
\DeclareMathOperator\irr{Irr} 
\begin{document}

\mbox{}
\vspace{-4ex}
\title[Irreducible decomposition of binomial ideals]
      {Irreducible decomposition of binomial ideals}
\author{Thomas Kahle}
\address{Faculty of Mathematics\\Otto-von-Guericke Universit\"at\\ Magdeburg, Germany} 
\urladdr{\url{http://www.thomas-kahle.de}}
\author{Ezra Miller}
\address{Mathematics Department\\Duke University\\Durham, NC 27708}
\urladdr{\url{http://www.math.duke.edu/~ezra}}
\author{Christopher O'Neill}
\address{Mathematics Department\\Texas A\&M University\\TX 77843}
\urladdr{\url{http://www.math.tamu.edu/~coneill/}}

\makeatletter
  \@namedef{subjclassname@2010}{\textup{2010} Mathematics Subject Classification}
\makeatother
\subjclass[2010]{Primary: 13C05, 05E40, 20M25; Secondary: 05E15, 13F99, 13A02, 13P99}

\date{16 March 2016}

\begin{abstract}
Building on coprincipal mesoprimary decomposition [Kahle and Miller,
2014], we combinatorially construct an irreducible decomposition of
any given binomial ideal.  In a parallel manner, for congruences in
commutative monoids we construct decompositions that are direct
combinatorial analogues of binomial irreducible decompositions, and
for binomial ideals we construct decompositions into ideals that are
as irreducible as possible while remaining binomial.  We provide an
example of a binomial ideal that is not an intersection of binomial
irreducible ideals, thus answering a question of Eisenbud and
Sturmfels [1996].\vspace{-2.4ex}
\end{abstract}
\maketitle

\setcounter{tocdepth}{1}
\tableofcontents

\section{Introduction}\label{s:intro}

An ideal in a commutative ring is irreducible if it is not expressible
as an intersection of two ideals properly containing it.  Irreducible
ideals are primary, and any ideal $I$ in a Noetherian ring is an
intersection of irreducible ideals.  These \emph{irreducible
decompositions} are thus special cases of primary decomposition, but
likewise are hard to compute in general.  If $I$ is a monomial ideal,
however, this task is much easier: any monomial ideal is an
intersection of irreducible ideals that are themselves monomial ideals
(see \cite[Theorem~5.27]{cca} for polynomial rings and
\cite[Theorem~2.4]{irredres} for affine semigroup rings), and these
\emph{monomial irreducible decompositions} are heavily governed by
combinatorics.  The ease of monomial irreducible decomposition plus
the existence of binomial primary decomposition in polynomial rings over
algebraically closed fields \cite[Theorem~7.1]{ES96} motivated
Eisenbud and Sturmfels to ask:

\begin{question}[{\cite[Problem~7.5]{ES96}}]\label{q:binomirrdecomp}
Does every binomial ideal over an algebraically closed field admit a
binomial irreducible decomposition?
\end{question}

We answer Question~\ref{q:binomirrdecomp} using the theory of
mesoprimary decomposition \cite{mesoprimary}.  Our response has three
stages.  First, congruences in Noetherian commutative monoids admit
\emph{soccular decompositions} (Theorem~\ref{t:socculardecomp}), which
should be considered the direct combinatorial analogues of binomial
irreducible decompositions.  (\emph{Soccular congruences}
(Definition~\ref{d:soccular}) fail to be irreducible for the same
reason that prime congruences do; see the end of
\cite[Section~2]{mesoprimary} for details.)  Second, lifting to
binomial ideals the method of constructing soccular congruences (but
not lifting the construction itself; see
Example~\ref{e:nonsoccularbinoccular}) yields ideals that are, in a
precise sense, as irreducible as possible while remaining binomial
(Definition~\ref{d:binoccular}).  The resulting notion of
\emph{binoccular decomposition} for binomial ideals
(Theorem~\ref{t:binocculardecomp}) proceeds as far as possible toward
irreducible decomposition while remaining confined to the category of
binomial ideals.  Theorem~\ref{t:nonbinomialdecomp} demonstrates, by
example, that the confines of binomiality can prevent reaching all the
way to irreducible decomposition by exhibiting a binomial ideal not
expressible as an intersection of binomial irreducible ideals, thus
solving Eisenbud and Sturmfels' problem in the negative.  That said,
our third and final stage produces irreducible decompositions of
binomial ideals (Corollary~\ref{c:irreducibledecomp}) in a manner that
is as combinatorial as mesoprimary decomposition: each coprincipal
component has a canonical \emph{irreducible closure}
(Definition~\ref{d:irreducibleclosure}) that, while not itself an
irreducible ideal, has a canonical primary decomposition all of whose
components are irreducible (Theorem~\ref{t:irrclosureprimarydecomp}).

All three of the decompositions in this paper---soccular, binoccular,
and irreducible---descend directly from coprincipal decomposition
\cite[Theorems~8.4 and~13.3]{mesoprimary} (see Theorems~\ref{t:kmcong}
and~\ref{t:kmessential} for restatements of these results).  This is
true in two senses: (i)~the components in all three types of
decomposition are cogenerated by the same witnesses that cogenerate
the corresponding coprincipal components, and (ii)~the components
themselves are constructed by adding new relations to the
corresponding coprincipal components.  To be more precise, soccular
congruences are constructed by adding relations between all pairs of
\emph{protected witnesses} (Definition~\ref{d:protected}) for
coprincipal congruences while maintaining their cogenerators
(Theorem~\ref{t:protected} and Corollary~\ref{c:protected}).
Similarly, binoccular ideals are constructed by repeatedly throwing
into a coprincipal ideal as many binomial socle elements as possible
while maintaining a monomial cogenerator in the socle
(Definitions~\ref{d:binoccularclosure}
and~\ref{d:binoccularcomponent}).  In contrast, irreducible closures
allow arbitrary polynomials to be thrown in, not merely binomials.
Although this concrete description of irreducible closure is accurate,
the construction of irreducible closures
(Definition~\ref{d:irreducibleclosure}) is accomplished with more
abstract, general commutative algebra.  Consequently, the reason why
irreducible closures have canonical irreducible decompositions is
particularly general, from the standpoint of commutative algebra,
involving embeddings of rings inside of Gorenstein localizations
(Remark~\ref{r:Gor}).

Finally, it bears mentioning that for the proofs of correctness---at
least for the decompositions in rings as opposed to monoids---we make
explicit a unifying principle, in the form of equivalent criteria
involving socles and monomial localization
(Lemma~\ref{l:intersection}), for when a binomial ideal in a monoid
algebra equals a given intersection of ideals.

\subsubsection*{Note on prerequisites}
Although the developments here are based on those in
\cite{mesoprimary}, the reader is not assumed to have assimilated the
results there.  The exposition here assumes familiarity only with the
most basic monoid theory used in \cite{mesoprimary}.  To make this
paper self-contained, every result from \cite{mesoprimary} that is
applied here is stated precisely in Section~\ref{s:preliminaries} with
prerequisite definitions.  In fact, Section~\ref{s:preliminaries}
serves as a handy summary of~\cite{mesoprimary}, proceeding through
most of its logical content as efficiently as~possible.

\section{Preliminaries}\label{s:preliminaries}

We need to briefly review some definitions and results from
\cite{mesoprimary}.  Following that paper, we first deal with monoid
congruences (the combinatorial setting) and then the respective
binomial ideal counterparts (the arithmetic setting).  Throughout, let
$Q$ denote a commutative Noetherian monoid and $\kk$ a field.  We
assume familiarity with basic notions from monoid theory; see
Sections~2 and~3 of~\cite{mesoprimary}, which contain an introduction
to the salient points with binomial algebra in mind.  For an example
of the kinds of concepts we assume, an element $q \in Q$ is
\emph{partly cancellative} if $q + a = q + b \neq \nil \implies a = b$
for all cancellative $a,b \in Q$, where $\nil \in Q$ is nil
\cite[Definition~2.9]{mesoprimary}.

\begin{defn}\label{d:cong}
An equivalence relation $\til$ on $Q$ is a \emph{congruence} if
$a \sim b$ implies $a + c \sim b + c$ for all $a, b, c \in Q$.  A
\emph{binomial} in $\kk[Q]$ is an element of the form
$\ttt^a - \lambda\ttt^b$ where $a, b \in Q$ and $\lambda \in \kk$.  An
ideal $I \subset \kk[Q]$ is \emph{binomial} (resp.~\emph{monomial}) if
it can be generated by binomials (resp.~monomials).
\end{defn}

\begin{remark}\label{r:chain}
A binomial ideal $I \subset \kk[Q]$ induces a congruence~$\til_I$ on
$Q$ that sets $a \sim_I b$ whenever $\ttt^a - \lambda\ttt^b \in I$ for
some nonzero $\lambda \in \kk$.  The quotient algebra $\kk[Q]/I$ is
finely graded by the quotient monoid $Q/\til_I$.  Conversely, each
congruence on $Q$ is of the form $\til_I$ for some binomial ideal
$I \subset \kk[Q]$, although more than one~$I$ is possible: the nil
class can be zero or not \cite[Proposition~9.5]{mesoprimary}, and the
congruence~forgets~coefficients.
\end{remark}

\begin{defn}[{\cite[Definitions~2.12, 3.4, 4.7, 4.10, 7.1, 7.2, 7.7,
and~7.12]{mesoprimary}}]\label{d:kmcong}
Fix a congruence~$\til$ on~$Q$ and a prime $P \subset Q$.  Write
$\oQ_P = Q_P/\til$, where $Q_P$ is the localization along~$P$, and
denote by~$\ol q$ the image of~$q \in Q$ in $\oQ = Q/\til$.
\begin{enumerate}
\item%
An element $q \in Q$ is an \emph{aide} for $w \in Q$ and a generator
$p \in P$ if $\ow \ne \ol q$, and $\ow + \ol p = \ol q + \ol p$, and
$\ol q$ is maximal in the set $\{\ol q,\ow\}$.  The element $q$ is a
\emph{key aide} for $w$ if $q$ is an aide for $w$ for each generator
of~$P$.  An element $w \in Q$ is a \emph{witness} for~$P$ if it has an
aide for each $p \in P$, and a \emph{key witness} for~$P$ if it has a
key aide.  A key witness $w$ is a \emph{cogenerator} of $\til$ if
$w + p$ is nil modulo $\til$ for all $p \in P$.

\item%
The congruence~$\til$ is \emph{$P$-primary} if every $p \in P$ is
nilpotent in~$\oQ$ and every $f \in Q \minus P$ is cancellative
in~$\oQ$.  A $P$-primary congruence~$\til$ is \emph{mesoprimary} if
every element of the quotient~$\oQ$ is partly cancellative.  The
congruence~$\til$ is \emph{coprincipal} if it is mesoprimary and every
cogenerator for $\til$ generates the~same~ideal in~$\oQ$.

\item%
The \emph{coprincipal component $\til_w^P$ of~$\til$ cogenerated by a
witness $w \in Q$ for~$P$} is the coprincipal congruence that relates
$a, b \in Q$ if one of the following is satisfied:
\begin{enumerate}
\item%
both $\ol a$ and $\ol b$ generate an ideal not containing $\ol q$ in
$\oQ_P$; or
\item%
$\ol a$ and $\ol b$ differ by a unit in $\oQ_P$ and $\ol a + \ol c =
\ol b + \ol c = \ol q$ for some $\ol c \in \oQ_P$.
\end{enumerate}
\end{enumerate}
A (key) witness for~$P$ may be called a (key) $\til$-witness for~$P$
to specify~$\til$.  Congruences may be called $P$-mesoprimary or
$P$-coprincipal to specify~$P$.
\end{defn}

\begin{defn}[{\cite[Definitions~5.1 and~5.2]{mesoprimary}}]\label{d:primecong}
Fix a congruence~$\til$ on a monoid~$Q$, a prime ideal $P \subset Q$,
and an element $q \in Q$ that is not nil modulo~$\til$.
\begin{enumerate}
\item%
Let $G_P \subset Q_P$ denote the unit group of the localization~$Q_P$,
and write $K_q^P \subset G_P$ for the stabilizer of~$\ol q \in \oQ_P$
under the action of~$G_P$.

\item%
If $\app$ is the congruence on~$Q_P$ that sets $a \approx b$ whenever
\begin{itemize}
\item%
$a$ and $b$ lie in $P_P$ or
\item%
$a$ and $b$ lie in $G_P$ and $a - b \in K_q^P$,
\end{itemize}
then the \emph{$P$-prime congruence} of~$\til$ at $q$ is $\ker(Q \to
Q_P/\app)$.

\item%
The $P$-prime congruence at $q$ is \emph{associated to $\til$} if $q$
is a key witness for~$P$.
\end{enumerate}
\end{defn}

\begin{defn}[{\cite[Definition~8.1]{mesoprimary}}]\label{d:kmmesodecomp}
An expression of a congruence~$\til$ on~$Q$ as a common refinement
$\bigcap_i \app_i$ of mesoprimary congruences is a \emph{mesoprimary
decomposition} of~$\til$ if, for each~$\app_i$ with associated prime
$P_i \subset Q$, the $P_i$-prime congruences of~$\til$ and~$\app_i$ at
each cogenerator for~$\app_i$ coincide.  This decomposition is
\emph{key} if every cogenerator for every~$\app_i$ is a key witness
for~$\til$.
\end{defn}

\begin{thm}[{\cite[Theorem~8.4]{mesoprimary}}]\label{t:kmcong}
Each congruence~$\til$ on $Q$ is the common refine\-ment of the
coprincipal components cogenerated by its key witnesses.
\end{thm}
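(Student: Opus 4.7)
The plan is to establish the two inclusions of congruences $\til$ and $\til' := \bigcap_w \til_w^{P_w}$, where $w$ runs over key $\til$-witnesses with associated primes~$P_w$.

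The inclusion $\til \subseteq \til'$ is direct. If $a \sim b$ under~$\til$, fix a key witness~$w$ for a prime~$P$. The images $\ol a = \ol b$ coincide in~$\oQ$, hence in the localization~$\oQ_P$. If the principal ideal they generate in~$\oQ_P$ does not contain~$\ol w$, then condition~(a) of Definition~\ref{d:kmcong}(3) applies. Otherwise $\ol a + \ol c = \ol w$ for some $\ol c \in \oQ_P$; then $\ol a$ and $\ol b$ differ by the identity unit of $G_P$, and $\ol b + \ol c = \ol w$ as well, verifying condition~(b). Either way, $a \til_w^P b$.

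The reverse inclusion $\til' \subseteq \til$ is the substantive direction. Arguing contrapositively, given $a \not\sim b$ in~$Q$, I would exhibit a key $\til$-witness~$w$ for some prime~$P$ with $a \not\til_w^P b$, meaning neither condition~(a) nor~(b) of Definition~\ref{d:kmcong}(3) holds. The plan is:
\begin{enumerate}
\item select $P \subset Q$ minimal among primes for which the pair $(a,b)$ remains separated in~$\oQ_P$, in the sense that there exists $q \in Q$ with $\ol a + \ol c = \ol b + \ol c = \ol q$ for some $\ol c \in \oQ_P$ while $\ol a$ and $\ol b$ fail to differ by a unit of~$\oQ_P$;
\item starting from such a~$q$, iteratively replace $q$ by $q + p$ for generators $p \in P$ along which aides for the remaining generators still exist, using the Noetherian hypothesis on~$Q$ to force termination at a cogenerator~$w$;
\item verify that this~$w$ is a key $\til$-witness for~$P$ and that both defining conditions of $\til_w^P$ fail on~$(a,b)$.
\end{enumerate}

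The main obstacle is step~(2): a key witness requires an aide for \emph{every} generator of~$P$ simultaneously, but extending~$q$ along one generator may destroy an aide for another. This is handled by combining Noetherianity of~$Q$---which bounds the length of any chain of partial witnesses---with careful tracking of the subset of generators of~$P$ for which aides persist; the minimality of~$P$ in step~(1) ensures that at least one aide survives for each generator as the construction proceeds. Once~$w$ is in hand, verifying that condition~(a) fails (because $\ol w$ lies in the principal ideal generated by~$\ol a$ in~$\oQ_P$ by construction) and condition~(b) fails (because $\ol a$ and $\ol b$ still do not differ by a unit of~$\oQ_P$) is straightforward.
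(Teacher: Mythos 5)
The paper does not prove this theorem; it cites \cite[Theorem~8.4]{mesoprimary} and leaves the proof to that source.  Your easy direction is fine (one small point: you implicitly use that if $\ol a=\ol b$ in~$\oQ_P$ then either $\ol w\notin\<\ol a\>$, giving~(a), or $\ol w\in\<\ol a\>$, giving~(b); this dichotomy is correct).  The substantive direction has a genuine gap in step~(1).

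Your ``separation'' criterion insists on finding a prime $P$ and $q$ such that $\ol a+\ol c=\ol b+\ol c=\ol q$ \emph{and} $\ol a,\ol b$ fail to differ by a unit in~$\oQ_P$.  This second requirement is too strong, and there are congruences for which no prime $P$ satisfies it.  Take $\til$ to be the trivial congruence on $Q=\NN$ and $a\neq b$: for $P=\NN_{>0}$, no $\ol c$ merges $\ol a$ and $\ol b$ at all, and for $P=\nothing$ the quotient $\oQ_P\cong\ring{Z}$ is a group so $\ol a$ and $\ol b$ automatically differ by a unit.  Your step~(1) therefore produces no prime~$P$, and the argument stalls.  Yet the theorem is true in this example (via $P=\nothing$, where every $w$ is a key witness and $\til_w^{\nothing}$ equals the trivial congruence).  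The correct observation is that condition~(b) of Definition~\ref{d:kmcong}(3) can fail not only because $\ol a,\ol b$ do not differ by a unit, but also because no \emph{common}~$\ol c$ carries both to~$\ol w$; in the example this second alternative is exactly what separates $a$ from~$b$.  A working argument must allow for both failure modes of~(b), not just the unit condition.

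Separately, step~(2) is asserted rather than argued.  The claim that ``minimality of $P$ ensures that at least one aide survives for each generator as the construction proceeds'' is precisely the delicate content of the original \cite[Theorem~8.4]{mesoprimary}; as written it is a statement of what one would like to be true, not a proof.  Without a concrete mechanism---for instance, working with a common merger $\ol a+\ol c=\ol b+\ol c$ and maximizing over such $\ol c$ modulo Green's preorder, then reducing to a smaller prime whenever an aide for some generator is lost---the termination-with-the-right-output claim is unsupported.  Note that the paper's Corollary~\ref{c:kmcong} records the precise output the original proof produces (an element $u$ so that $a+u$ is a key witness with key aide $b+u$); aiming for that explicit form would tighten your step~(2) considerably.
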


The proof of Theorem~\ref{t:kmcong} at the source
\cite[Theorem~8.4]{mesoprimary} yields the following corollary, which
is necessary for Theorem~\ref{t:oneshotsocc}.

\begin{cor}\label{c:kmcong}
Given a congruence~$\til$ on $Q$ and elements $a, b \in Q$ with $a
\not\sim b$, there exists a monoid prime $P \subset Q$ and an element
$u \in Q$ such that (after possibly swapping $a$ and $b$) the element
$a + u$ is a key $\til$-witness for~$P$ with key aide $b + u$.
\end{cor}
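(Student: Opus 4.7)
The plan is to mimic the proof of Theorem~\ref{t:kmcong} by directly constructing the required $u$ and~$P$ from the non-equivalent pair $a \not\sim b$. The whole argument rests on the Noetherianness of~$Q$, which yields the ascending chain condition for the divisibility preorder on $\oQ$ (where $\ol r \le \ol{r'}$ iff $\ol{r'} = \ol r + \ol s$ for some~$s \in Q$).

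First, set $S = \{r \in Q : a + r \not\sim b + r\}$, which is nonempty since $0 \in S$. I would choose $u \in S$ that is \emph{jointly} maximal in the following sense: no $s \in Q$ with $u + s \in S$ satisfies $\ol{a+u+s} > \ol{a+u}$ or $\ol{b+u+s} > \ol{b+u}$. Such a~$u$ exists by applying the ACC to $\oQ \times \oQ$. By this maximality, whenever $u + p \in S$ we must have both $a+u+p \sim a+u$ and $b+u+p \sim b+u$.

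Next define $P = \{p \in Q : a + u + p \sim b + u + p\} = \{p \in Q : u + p \notin S\}$. Since $\sim$ is a congruence, $P$ is an ideal of~$Q$. To see $P$ is prime, suppose $p_1, p_2 \notin P$, so $u + p_i \in S$ for $i = 1, 2$. By the joint maximality, $a + u + p_1 \sim a + u$, hence $a + u + p_1 + p_2 \sim a + u + p_2$; likewise $b + u + p_1 + p_2 \sim b + u + p_2$. Since $u + p_2 \in S$ gives $a + u + p_2 \not\sim b + u + p_2$, we conclude $a + u + p_1 + p_2 \not\sim b + u + p_1 + p_2$, so $p_1 + p_2 \notin P$. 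Thus $P$ is prime.

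Finally, the key-witness conditions are straightforward: $\ol{a+u} \ne \ol{b+u}$ since $u \in S$; for every generator $p$ of~$P$, $p \in P$ forces $\ol{a+u} + \ol p = \ol{b+u} + \ol p$; and after possibly swapping $a$ with $b$ (the swap the corollary permits), we may assume $\ol{b+u}$ is maximal in $\{\ol{a+u}, \ol{b+u}\}$ under the divisibility preorder (if incomparable, either element is max, so no swap is needed). Hence $b+u$ serves as a common aide for the witness $a+u$ at every generator of~$P$, making it a key aide. The main obstacle is the primality of~$P$; the temptation is to impose maximality only on $\ol{a+u}$, but the argument genuinely needs joint maximality of both $\ol{a+u}$ and $\ol{b+u}$ to propagate non-equivalence through the sum $p_1 + p_2$.
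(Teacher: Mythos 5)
Your overall strategy is the right one: build $P$ as a colon-type ideal $P = \{p : a+u+p \sim b+u+p\}$, force primality by a maximality argument, and read off the key-witness conditions.  The verification of the key-witness conditions at the end is correct and the swap of $a,b$ is handled properly.  But there are two genuine gaps in the middle.

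First, the ascending chain condition you invoke does not hold.  Noetherianness of~$Q$ gives ACC on \emph{ideals} of~$Q$, not ACC on the divisibility (Green's) preorder of~$\oQ$.  Already in $Q = \NN$ with the trivial congruence we get the infinite ascending chain $\ol 0 < \ol 1 < \ol 2 < \cdots$, so the ``jointly maximal'' $u$ you describe need not exist.  (The paper does use a finiteness-of-Green's-classes argument inside the proof of Theorem~\ref{t:protected}, but only after restricting to a \emph{primary} quotient $\oQ_P$; here $\til$ is arbitrary and nothing of the sort is available.)

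Second, even granting such a~$u$, the step ``by this maximality, whenever $u+p \in S$ we must have both $a+u+p \sim a+u$ and $b+u+p \sim b+u$'' overstates what maximality gives.  Not being strictly larger in the divisibility preorder means only that $\ol{a+u+p}$ and $\ol{a+u}$ are Green's equivalent in~$\oQ$, i.e.\ $a+u+p+c \sim a+u$ for \emph{some} $c \in Q$, not that the classes themselves coincide.  Your primality computation then needs $a+u+p_1+p_2 \sim a+u+p_2$ and its $b$-analogue on the nose; with only Green's equivalence the two auxiliary elements $c_a, c_b$ witnessing the two equivalences can differ, and the contradiction with $u+p_2 \in S$ does not follow.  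The fix is to maximize the colon ideal directly: set $T = \{r \in Q : a+r \sim b+r\}$, which is an ideal of~$Q$ not containing~$0$, and choose $u \notin T$ so that $(T:u) = \{p : u+p \in T\}$ is maximal among all such colon ideals.  This maximum exists by ACC on ideals of~$Q$, and then the standard associated-prime argument gives primality verbatim: if $p_1,p_2 \notin (T:u)$, then $u+p_1 \notin T$ and $(T:u) \subseteq (T:u+p_1)$, so maximality forces $(T:u) = (T:u+p_1)$, whence $p_1+p_2 \in (T:u)$ would give $p_2 \in (T:u+p_1) = (T:u)$, a contradiction.  Your concluding paragraph then goes through unchanged with $P = (T:u)$.
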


A few more definitions are required before a precise statement of the
main existence result for binomial ideals from~\cite{mesoprimary} can
be made in Theorem~\ref{t:kmessential}.

\begin{defn}[{\cite[Definitions~11.7, 11.11, and~12.1]{mesoprimary}}]%
\label{d:mesoprime}
Let $I \subset \kk[Q]$ be a binomial ideal.  Fix a prime $P \subset Q$
and an element $q \in Q$ with $\ttt^q \notin I_P$.
\begin{enumerate}
\item%
Let $G_P \subset Q_P$ denote the unit group of~$Q_P$, and write $K_q^P
\subset G_P$ for the subgroup of~$G_P$ that fixes the class of~$q$
modulo $\til_I$.

\item%
Denote by $\rho: K_q^P \to \kk^*$ the group homomorphism such that
$\ttt^u - \rho(u)$ lies in the kernel of the $\kk[G_P]$-module
homomorphism $\kk[G_P] \to \kk[Q_P]/I_P$ taking $1 \mapsto \ttt^q$.

\item%
The \emph{$P$-mesoprime ideal of~$I$ at~$q$} is the preimage $I_q^P$
in $\kk[Q]$ of $(I_q^P)_P = I_\rho + \mm_P$, where
$I_\rho = \<\ttt^u - \rho(u-v)\ttt^v \mid u - v \in K_q^P\> \subset
\kk[Q]_P$.

\item%
An element $w \in Q$ is an \emph{$I$-witness} for a monoid prime~$P$
if $w$ is a $\til_I$-witness for~$P$ or if $P = \nothing$ is empty and
$I$ contains no monomials.  $w \in Q$ is an \emph{essential
$I$-witness} if $w$ is a key $\til_I$-witness or some polynomial in
$\kk[Q_P]/I_P$ annihilated by~$\mm_P$ has $\ttt^w$ minimal (under
Green's preorder) among its nonzero~monomials.

\item%
The mesoprime $I_q^P$ is \emph{associated to $I$} if $q$ is an
essential $I$-witness for~$P$.
\end{enumerate}
\end{defn}

\begin{defn}[{\cite[Definitions~10.4, 12.14, 12.18]{mesoprimary}}]\label{d:kmessential}
Fix a binomial ideal $I \subset \kk[Q]$ and a prime $P \subset Q$.
\begin{enumerate}

\item%
The ideal $I$ is \emph{mesoprimary} (resp.~\emph{coprincipal}) if the
congruence~$\til_I$ is mesoprimary (resp.~coprincipal) and $I$ is
maximal among binomial ideals in $\kk[Q]$ inducing this congruence.

\item%
The \emph{$P$-coprincipal component of~$I$ at $w \in Q$} is the
preimage $W_w^P(I) \subset \kk[Q]$ of the ideal $I_P + I_\rho +
M_w^P(I) \subset \kk[Q]_P$, where $M_w^P(I)$ is the ideal generated by
the monomials $\ttt^u \in \kk[Q]$ such that~\mbox{$w \notin \<u\>
\subset Q_P$}.
\end{enumerate}
\end{defn}

\begin{defn}[{\cite[Definition~13.1]{mesoprimary}}]\label{d:kmcoprincdecomp}
An expression $I = \bigcap_j I_j$ is a \emph{mesoprimary
decomposition} if each component $I_j$ is $P_j$-mesoprimary and the
$P_j$-mesoprimes of~$I$ and~$I_j$ at each cogenerator of~$I_j$
coincide.  This decomposition is \emph{combinatorial} if every
cogenerator of every component is an essential $I$-witness.
A~mesoprimary decomposition is a \emph{coprincipal decomposition} if
every component is coprincipal.
\end{defn}

\begin{thm}[{\cite[Theorem~13.3]{mesoprimary}}]\label{t:kmessential}
Every binomial ideal $I \subset \kk[Q]$ is the intersection of the
coprincipal components cogenerated by its essential witnesses.  In
particular, every binomial ideal admits a combinatorial mesoprimary
decomposition.
\end{thm}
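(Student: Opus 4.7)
The plan is to deduce the intersection formula from the congruence analogue Theorem~\ref{t:kmcong} by layering coefficient data on top of the monoid-combinatorial skeleton, and then read off the combinatorial mesoprimary decomposition statement as a direct consequence.

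For the easy containment $I \subseteq \bigcap_w W_w^P(I)$, the inclusion is immediate from Definition~\ref{d:kmessential}, since each coprincipal component $W_w^P(I)$ is by construction the preimage in $\kk[Q]$ of an ideal in $\kk[Q]_P$ that contains $I_P$, hence contains $I$.

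For the reverse containment, I would proceed as follows. Given $f \in \bigcap_w W_w^P(I)$, where $w$ ranges over all essential $I$-witnesses, use the fine grading of $\kk[Q]/I$ by $Q/\til_I$ from Remark~\ref{r:chain} to decompose $f$ into graded pieces, each supported on a single $\til_I$-class, reducing to the case of a single piece. Within one class, a surviving piece is a $\kk$-linear combination of monomials $\ttt^a$ whose membership in $I$ is governed by the coefficient homomorphism $\rho$ of Definition~\ref{d:mesoprime}. Assuming a piece survives modulo $I$, apply Corollary~\ref{c:kmcong} to the congruence $\til_I$ to locate a prime $P$ and an element $u \in Q$ exhibiting a key $\til_I$-witness $w = a+u$ with key aide $b+u$ at which the piece is detected. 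The hypothesis $f \in W_w^P(I)$ then forces the coefficient data in the surviving piece to match $\rho$ at $w$, and after partial cancellativity clears the shift by $u$, one obtains $f \in I$, a contradiction.

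The main obstacle is handling the coefficient data cleanly: the congruence-level argument of Theorem~\ref{t:kmcong} only tracks monoid classes, whereas the binomial statement must also distinguish between coefficients in $\kk^*$ that rescale one monomial to another. The key technical step, which is precisely where the notion of essential $I$-witness (as opposed to mere key $\til_I$-witness) becomes indispensable, is to show that the socle of $\kk[Q_P]/I_P$ is detected by the essential witnesses: this is the content of the socle clause in Definition~\ref{d:mesoprime}, and it ensures that the additional witnesses beyond key $\til_I$-witnesses capture every coefficient relation required to force $f \in I$. Once the intersection formula is in place, the combinatorial mesoprimary decomposition conclusion follows because each $W_w^P(I)$ is mesoprimary with essential-witness cogenerator $w$ by Definition~\ref{d:kmessential}, while the agreement of $P$-mesoprimes demanded by Definition~\ref{d:kmcoprincdecomp} is built into the construction of $W_w^P(I)$ from the local data of $I$ at $w$ through the same homomorphism $\rho$.
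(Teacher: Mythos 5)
There is a genuine gap, and it lies in the reduction to $\til_I$-graded pieces. The coprincipal components $W_w^P(I)$ are binomial ideals \emph{containing} $I$, so each induces a congruence \emph{coarsening} $\til_I$; an ideal homogeneous with respect to a coarser congruence need not be homogeneous for the finer one. Concretely, the summand $I_\rho$ inside $W_w^P(I)_P$ is built from binomials $\ttt^u - \rho(u-v)\ttt^v$ with $u-v$ in the stabilizer $K_w^P$ of the cogenerator class $\ol w$, and such $u,v$ need not satisfy $u \sim_I v$ (the stabilizer of $\ol w$ is typically strictly larger than the stabilizer of a generic class, since $w$ sits just below nil). For instance, with $I = \langle xz - yz, z^2\rangle$, $P = \{z\}$, and $w = z$, the component contains $x - y$ even though $x \not\sim_I y$. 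Hence $f \in \bigcap_w W_w^P(I)$ does not imply that each $\til_I$-graded piece of $f$ lies in the intersection, and the claim that the intersection is $\til_I$-graded is equivalent to what you are trying to prove. The downstream steps inherit this trouble: after the (invalid) reduction you have a single graded piece, which is a scalar multiple of a monomial, and Corollary~\ref{c:kmcong} then only ever produces key $\til_I$-witnesses. If that chain of reasoning worked, key witnesses alone would suffice --- but Example~\ref{e:nonbinomialdecomp} shows they do not: there the monomials $x^2, xy, y^2$ are all non-key witnesses yet their coprincipal components are indispensable, precisely because of the non-binomial socle element $x^2 + y^2 - xy$.

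The paper itself flags the correct route: Lemma~\ref{l:intersection} is identified in the text as ``the core of the original proof'' of this theorem. The right reduction is not to $\til_I$-graded pieces but to $\soc_P(I)$ via the equivalence of conditions~\ref{i:intersect} and~\ref{i:monLocInj} in that lemma. Given a nonzero $f \in \soc_P(I)$, Definition~\ref{d:mesoprime}.4 guarantees that some monomial of $f$ sits at an essential $I_P$-witness $w$ (this is exactly where the socle clause in the definition of essential witness does its work), and one then checks that $f$ remains nonzero modulo $W_w^P(I)_P$; this is the argument reused verbatim in the proofs of Theorems~\ref{t:binocculardecomp} and~\ref{t:irreducibledecomp}. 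You correctly sense that the socle is where essential witnesses become indispensable, but your proposed mechanism for exploiting them --- graded decomposition plus Corollary~\ref{c:kmcong} --- cannot reach the socle elements that are genuine polynomials, and so cannot close the argument.
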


Theorem~\ref{t:kmessential} produces a primary decomposition of any
binomial ideal via the next result.  Precise details about the primary
components here can be found at the cited locations
in~\cite{mesoprimary}.

\begin{prop}[{\cite[Corollary~15.2 and Proposition~15.4]{mesoprimary}}]%
\label{p:kmprimarydecomp}
Fix a mesoprimary ideal~$I \subset \kk[Q]$.  The associated primes
of~$I$ are exactly the minimal primes of its associated mesoprime.
Consequently, $I$ admits a canonical minimal primary decomposition.
When $\kk = \ol\kk$ is algebraically closed, every component of this
decomposition is binomial.
\end{prop}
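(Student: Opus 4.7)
The plan is to pass to the localization $\kk[Q]_P$ at the unique monoid prime~$P$ witnessing the mesoprimarity of~$\til_I$, and to analyze $\kk[Q]_P/I_P$ as a module over $\kk[Q]_P/J_P$, where $J = I_q^P$ is the associated mesoprime at any cogenerator $q \in Q$.  By Definition~\ref{d:mesoprime}, $J_P = I_\rho + \mm_P$, so $\kk[Q]_P/J_P$ is (after the $\rho$-twist) a copy of the group algebra $\kk[G_P/K_q^P]$ of a finitely generated abelian group, whose prime spectrum is completely transparent.  This transparency is the source of the entire decomposition.

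Step 1 is to identify the minimal primes of~$J$.  They correspond bijectively to the minimal primes of $\kk[G_P/K_q^P]$ (twisted by~$\rho$); when $\kk = \ol\kk$ and the torsion order of $G_P/K_q^P$ is invertible in~$\kk$, these are parametrized by the characters $\chi : G_P \to \kk^*$ extending~$\rho$.  Each such~$\chi$ gives a binomial prime $\pp_\chi \subset \kk[Q]_P$ generated by $\mm_P$ together with the binomials $\ttt^u - \chi(u-v)\ttt^v$ for $u,v \in G_P$, and the contraction of $\pp_\chi$ to $\kk[Q]$ is a minimal prime of~$J$.

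Step 2 is the equality $\ass(\kk[Q]/I) = \mathrm{Min}(J)$.  The inclusion $\ass(I) \subseteq \mathrm{Min}(J)$ follows from $\sqrt{I_P} = \sqrt{J_P}$, since both localizations share the same monomial radical containing~$\mm_P$, so each associated prime of~$I$ descends to a minimal prime of~$J$.  The reverse inclusion is the heart of the proof: for each character~$\chi$ extending~$\rho$, I would construct a socle element of $\kk[Q]_P/I_P$ whose annihilator is precisely~$\pp_\chi$, taking a suitable character-weighted binomial combination of cogenerator monomials.  The maximality of~$I$ among binomial ideals inducing~$\til_I$ is essential here; it forces the socle of $\kk[Q]_P/I_P$ to be large enough that every character extending~$\rho$ is realized by an annihilator.

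The canonical minimal primary decomposition now follows because every associated prime of~$I$ is minimal, so the components $I\kk[Q]_\pp \cap \kk[Q]$ for $\pp \in \mathrm{Min}(J)$ are uniquely determined with no choice of embedded components, and they intersect to~$I$ by standard primary decomposition together with the radical equality.  Over algebraically closed~$\kk$, each $\pp_\chi$ is binomial by Step~1, and the corresponding primary component is obtained by adjoining the character relations $\ttt^u - \chi(u-v)\ttt^v$ (for $u-v \in G_P$) to~$I$, preserving binomiality throughout.  The main obstacle I expect is the reverse inclusion $\mathrm{Min}(J) \subseteq \ass(I)$ in Step~2: producing socle elements with a prescribed character as annihilator genuinely requires the maximality clause in Definition~\ref{d:kmessential}(1), not merely the congruence~$\til_I$, since a strictly smaller binomial ideal inducing the same congruence could have socle elements concentrated in a proper subset of the character components.
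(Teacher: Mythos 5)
This proposition is quoted from \cite[Corollary~15.2 and Proposition~15.4]{mesoprimary} without proof, so there is no in-paper argument to compare against.  Evaluating the proposal on its own terms, there is a genuine gap, and it sits in the containment you labeled easy rather than the one you labeled hard.  Granting your radical equality $\sqrt{I_P} = \sqrt{J_P}$, the containment $\mathrm{Min}(J) \subseteq \ass(I)$ is immediate, because minimal primes of an ideal are always among its associated primes; the socle-element construction you call ``the heart of the proof'' does no work for that direction.

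The substantive content is the other containment, $\ass(I) \subseteq \mathrm{Min}(J)$, which is precisely the assertion that a mesoprimary ideal has no embedded primes.  Your justification --- that it ``follows from $\sqrt{I_P}=\sqrt{J_P}\ldots$ so each associated prime of~$I$ descends to a minimal prime of~$J$'' --- is a non sequitur: equality of radicals pins down the minimal primes and nothing more.  For instance $\<x^2,xy\>$ and $\<x\>$ in $\kk[x,y]$ share a radical, but the former has the embedded prime $\<x,y\>$.  A correct proof of this half must invoke the mesoprimary structure in an essential way: roughly, the partly-cancellative condition and the maximality of~$I$ among binomial ideals inducing~$\til_I$ force $\kk[Q]_P/I_P$ to behave as a torsion-free (or suitably filtered) module over the group-algebra quotient $\kk[Q]_P/J_P$, whose associated primes are therefore all minimal.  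You correctly name the relevant ingredients --- the group-algebra structure of the mesoprime quotient, the characters~$\chi$ extending~$\rho$, and the maximality clause --- but you aim them at the containment that comes for free and leave untouched the one carrying the mathematical content.
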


\begin{thm}[{\cite[Theorems~15.6 and 15.12]{mesoprimary}}]\label{t:kmprimdecomp}
Fix a binomial ideal $I \subseteq \kk[Q]$.  Each associated prime
of~$I$ is minimal over some associated mesoprime of~$I$.  If\/~$\kk =
\ol\kk$ is algebraically closed, then refining any mesoprimary
decomposition of~$I$ by canonical primary decomposition of its
components yields a binomial primary decomposition of~$I$.
\end{thm}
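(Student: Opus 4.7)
The plan is to assemble the statement from Theorem~\ref{t:kmessential}, Proposition~\ref{p:kmprimarydecomp}, and the elementary commutative algebra fact that $\ass(\kk[Q]/I) \subseteq \bigcup_j \ass(\kk[Q]/I_j)$ for any finite intersection $I = \bigcap_j I_j$.

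For the first claim, I would invoke Theorem~\ref{t:kmessential} to write $I = \bigcap_j I_j$ as the intersection of coprincipal components cogenerated by essential witnesses. Since each $I_j$ is mesoprimary, Proposition~\ref{p:kmprimarydecomp} identifies $\ass(\kk[Q]/I_j)$ with the set of minimal primes over the associated mesoprime of $I_j$, so every $P \in \ass(\kk[Q]/I)$ is minimal over the associated mesoprime of some component. Because the decomposition supplied by Theorem~\ref{t:kmessential} is combinatorial in the sense of Definition~\ref{d:kmcoprincdecomp}, every cogenerator of every $I_j$ is an essential $I$-witness, so by Definition~\ref{d:mesoprime}(5) the associated mesoprime of $I_j$ is in fact an associated mesoprime of $I$ itself. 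Chaining these observations yields the first claim.

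For the second claim, start with any mesoprimary decomposition $I = \bigcap_j I_j$ (for instance the one from Theorem~\ref{t:kmessential}). Apply Proposition~\ref{p:kmprimarydecomp} componentwise to obtain a canonical minimal primary decomposition $I_j = \bigcap_k Q_{j,k}$, with each $Q_{j,k}$ binomial by virtue of the algebraically closed hypothesis. Substituting gives $I = \bigcap_{j,k} Q_{j,k}$, a binomial primary decomposition of $I$; redundant components can be pruned if an irredundant decomposition is desired.

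The main obstacle is the first claim, specifically the step from ``$P$ is minimal over the associated mesoprime of some $I_j$'' to ``$P$ is minimal over some \emph{associated} mesoprime of $I$.'' The mesoprime-matching condition in Definition~\ref{d:kmcoprincdecomp} only equates the $P_j$-mesoprimes of $I$ and $I_j$ at cogenerators of $I_j$; it does not by itself certify that such a cogenerator is an essential $I$-witness. Bridging this gap is precisely where the combinatorial decomposition provided by Theorem~\ref{t:kmessential} is essential, since an arbitrary mesoprimary decomposition need not have this property.
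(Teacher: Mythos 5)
This theorem is stated in the preliminaries of the paper as a citation to Theorems~15.6 and 15.12 of \cite{mesoprimary}, and the paper itself gives no proof of it; so there is no ``paper's proof'' to compare against. Your reconstruction from the other stated preliminaries is sound, and you correctly identify the one subtle step: an arbitrary mesoprimary decomposition only matches mesoprimes at cogenerators of the components, so to conclude that a cogenerator's mesoprime is an \emph{associated} mesoprime of~$I$ you need the combinatoriality guaranteed by Theorem~\ref{t:kmcoprincdecomp}'s companion, Theorem~\ref{t:kmessential}. The chain $\ass(R/I)\subseteq\bigcup_j\ass(R/I_j)$, then Proposition~\ref{p:kmprimarydecomp} to identify $\ass(R/I_j)$ with the minimal primes over the associated mesoprime of~$I_j$, then mesoprime-matching plus essentiality of the cogenerator as an $I$-witness, is exactly right. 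For the second claim, applying Proposition~\ref{p:kmprimarydecomp} componentwise does yield the binomial primary decomposition. One small point worth noting: if you insist on a \emph{minimal} primary decomposition (distinct associated primes), merging two binomial primary components with the same prime by intersection can leave the class of binomial ideals; the theorem only asserts existence of \emph{a} binomial primary decomposition, so pruning redundancies suffices and no merging is required, but your phrasing (``redundant components can be pruned if an irredundant decomposition is desired'') should not be read as a claim that the minimal one is binomial.
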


\section{Soccular congruences}\label{s:soccular}

Although the condition to be a coprincipal quotient is strong, it does
not imply that a binomial ideal inducing a coprincipal congruence has
simple socle.  Precisely, the socle of a coprincipal quotient has only
one monomial up to units locally at the associated prime.
While this suffices for irreducible decomposition of monomial ideals,
modulo a binomial ideal the socle can have binomials and general
polynomials.  Our first step is soccular decomposition
(Theorem~\ref{t:oneshotsocc}), which parallels, at the level of
congruences, the construction of irreducible decompositions of
binomial ideals (Theorem~\ref{t:irreducibledecomp}).  While it is the
optimal construction in the combinatorial setting, soccular
decomposition cannot yield irreducible decompositions of binomial
ideals in general since these need not be binomial
(Example~\ref{e:nonbinomialdecomp}).  To start, here is a simple
example of a primary coprincipal binomial ideal that is reducible,
demonstrating that coprincipal decomposition of ideals is not
irreducible~\mbox{decomposition}.

\begin{example}\label{e:congsocle}
The congruence on~$\NN^2$ induced by the ideal $I = \<x^2-xy, xy-y^2,
x^3\>$ is coprincipal, but $x - y \in \soc_\mm(I)$ for
$\mm = \<x,y\>$.  This is because $x$ and $y$ are both key
witnesses and each is an aide for the other.
\end{example}

\begin{defn}\label{d:soccular}
A congruence~$\til$ on $Q$ is \emph{soccular} if its key witnesses all
generate the same principal ideal in the localized
quotient~$Q_P/\til$.
\end{defn}

\begin{defn}\label{d:soccularcollapse}
Fix a monoid prime $P \subset Q$ and a $P$-coprincipal congruence
$\til$ on $Q$ with cogenerator~$w \in Q$.  The \emph{(first) soccular
collapse of~$\til$} is the congruence~$\app$ that sets $a \approx b$
if $a, b \notin \<w\>$ and $a + p \sim b + p$ for all $p \in P$.  The
\emph{$i$-th soccular collapse of~$\til$} is the soccular collapse of
the $(i-1)$st soccular collapse of~$\til$.
\end{defn}

Soccular collapses remove key witness pairs that are not Green's
equivalent to the cogenerator of a coprincipal congruence.  It is
routine to check that the soccular collapse of a coprincipal
congruence is a coprincipal congruence (see the following lemmas).
The construction stabilizes since $Q$ is a Noetherian monoid and
consequently the iterated soccular collapse of a coprincipal
congruence is a soccular congruence.

In general, to form a congruence from a set of relations, one takes
monoid closure and then transitive closure.  Lemma~\ref{l:noclosure}
says that for a soccular collapse of a coprincipal congruence, both of
these operations are trivial.

\begin{lemma}\label{l:noclosure}
The soccular collapse of a $P$-coprincipal congruence~$\til$ 
is a congruence on $Q$ that coarsens $\til$.  
\end{lemma}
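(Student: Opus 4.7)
The plan is to verify directly that the relation $\app$, understood as the union of $\til$ with the new pairs $\{(a,b) : a, b \notin \<w\> \text{ and } a+p\sim b+p \text{ for all } p \in P\}$, is already an equivalence relation closed under translation by $Q$; since $\til \subseteq {\app}$ by construction, the coarsening claim as well as reflexivity and symmetry are immediate. What remains is transitivity and monoid closure.

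For transitivity, I would analyze a chain $a \app b \app c$ by cases on which of the two links come from $\til$ and which are new. Interpreting $\<w\>$ via its image in $\oQ$ makes it $\til$-saturated, so the condition $x \notin \<w\>$ propagates across $\til$-links; combined with transitivity of $\til$ applied to the auxiliary chain $x + p \sim y + p$, each of the four cases yields a pair that again lies in~$\app$.

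For monoid closure, the case $a \sim b$ is trivial by additivity of $\til$, so assume $(a,b)$ is a new pair and fix $c \in Q$. Adding $c$ to $a+p \sim b+p$ gives $(a+c)+p \sim (b+c)+p$ for all $p \in P$. If both $a+c$ and $b+c$ lie outside $\<w\>$, the new-pair condition applies directly. Otherwise I must show $a+c \sim b+c$ outright, so that the pair already lies in $\til$.

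This last subcase is the main obstacle, and my plan is to exploit the coprincipal structure of~$\til$. If $\ol{a+c} \in \<\ol w\>$, then $\ol{a+c+p} = \ol\nil$ for every $p \in P$, since $\ol w + \ol p = \ol\nil$ by the cogenerator property and this propagates to the ideal $\<\ol w\>$. The already-established relation $(a+c)+p \sim (b+c)+p$ then forces $\ol{b+c+p} = \ol\nil$ as well, so $\ol{b+c}$ is either nil or itself a cogenerator of~$\til$. Coprincipality---that all cogenerators generate a single principal ideal in $\oQ_P$, together with partial cancellativity of $\oQ$ from mesoprimarity---then pins down the common class, yielding $\ol{a+c} = \ol{b+c}$, as required. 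I expect the final matching-up to require a brief analysis of how the $G_P$-action on the cogenerator class transfers back from $\oQ_P$ to $\oQ$, but no further closure operations will be necessary.
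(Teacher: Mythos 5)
Your transitivity argument is sound, and in fact more careful than the paper's one-liner, which simply asserts that transitivity of $\app$ follows from that of $\til$; the observation that $\<w\>$ (viewed in $\oQ$ or $\oQ_P$) is $\til$-saturated is exactly what makes the mixed-link cases go through. The real divergence is in the monoid-closure step, where you correctly flag a subtlety the paper glosses over: after adding $c$, the pair $(a+c,\,b+c)$ might leave the set of elements outside $\<w\>$, so the new-pair condition need not apply directly.

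Where your proposal goes wrong is in how it resolves that subtlety. The simpler (and correct) observation is that the hard case never requires any new argument: if $c \in P$, then taking $p = c$ in the hypothesis $a+p \sim b+p$ already gives $a+c \sim b+c$, so the pair lies in $\til$; if $c \notin P$, then $\ol c$ is a unit in $\oQ_P$, so adding $c$ preserves Green's classes in $\oQ_P$ and neither $a+c$ nor $b+c$ can newly land in $\<w\>$. Your route instead tries to force $\ol{a+c} = \ol{b+c}$ from the coprincipal and cogenerator structure, and that step has a genuine gap. From $\ol{a+c}$ and $\ol{b+c}$ both annihilated by $P$ and both lying in $\<\ow\>$, you cannot conclude they coincide: two cogenerators can differ by a unit of $\oQ_P$, and coprincipality only says they generate the same ideal, not that they are equal. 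The deferred analysis of "how the $G_P$-action transfers back from $\oQ_P$ to $\oQ$" is precisely where this would fall apart if the situation could actually occur; it is saved only because, as above, it cannot. So: same overall plan as the paper (show the union of $\til$ with the new pairs is already closed), better attention to the $\<w\>$-membership issue, but the wrong tool aimed at it.
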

\begin{proof}
The soccular collapse $\app$ is symmetric and transitive since $\til$
is symmetric and transitive.  Suppose $a, b \notin \<w\>$ with
$a + p \sim b + p$ for all $p \in P$.  Then for all $q \in Q$,
$a + q + p \sim b + q + p$ for all $p \in P$ since $q + p \in P$, so
$a + q \approx b + q$.  Therefore $\app$ is a congruence on $Q$.
Lastly, if $a \sim b$, then $a + p \sim b + p$ for all $p \in P$, so
$\til$ refines~$\app$.
\end{proof}

\begin{lemma}\label{l:notmax}
Resuming the notation from Definition~\ref{d:soccularcollapse}, if $a
\approx b$ and $a \not\sim b$, then neither $a$ nor $b$ is maximal in
$Q$ modulo Green's relation.
\end{lemma}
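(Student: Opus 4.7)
The plan is to argue by contradiction: any $a \in Q$ maximal modulo Green's relation would have to equal the absorbing element $\nil \in Q$, but this is incompatible with the condition $a \notin \<w\>$ built into $a \approx b$ (Definition~\ref{d:soccularcollapse}). The argument for $b$ then follows by symmetry.

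First I would observe that, in a commutative monoid containing a nil, Green-maximality collapses to equality with $\nil$: if $a$ is maximal in~$Q$ modulo Green's relation, then $\<a\> = \<a + c\>$ in~$Q$ for every $c \in Q$, and specializing to $c = \nil$ yields $\<a\> = \<\nil\> = \{\nil\}$, forcing $a = \nil$ since $a \in \<a\>$. Then I would derive the contradiction: the equality $w + \nil = \nil$ already places $\nil$ inside the principal ideal $\<w\> \subset Q$, and equivalently, the cogenerator property $\ol w + \ol p = \nil$ for $p \in P$ shows that the nil class of~$\til$ lies in $\<\ol w\> \subset \oQ$. Under either interpretation of $\<w\>$, the conclusion $a = \nil$ directly contradicts the hypothesis $a \notin \<w\>$, so $a$ cannot be maximal.

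The only subtlety to verify is the equivalence between being Green-maximal in~$Q$ and being equal to $\nil$ in a commutative monoid with nil, which follows at once from testing the defining condition $\<a\> = \<a + c\>$ against $c = \nil$. The hypothesis $a \not\sim b$ is not actually used in the argument; it is retained in the statement so that the lemma addresses the pairs genuinely identified by the soccular collapse but kept distinct by~$\til$, which is the situation of interest in the sequel.
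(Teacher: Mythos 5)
The paper's one-line proof argues that $a$ and $b$ both \emph{strictly precede} the cogenerator $w$ in Green's preorder (on $\oQ_P$), which immediately gives non-maximality since $\ow$ lies strictly above them. Your proof instead identifies Green-maximality with equality to~$\nil$ and rules that out via $\nil\in\<w\>$. These are different routes, and yours has two genuine issues.

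First, your equivalence ``Green-maximal $\iff \nil$'' is either vacuous or too weak for what the paper actually needs. The ambient $Q$ in this paper is frequently $\NN^n$, which has no nil at all, so maximality ``in $Q$'' in your sense would make the lemma vacuous; the meaningful statement lives in the quotient $\oQ_P$, where $P$ is nilpotent. More importantly, the working content of this lemma is not just ``$\ol a,\ol b\ne\nil$'' but ``$\ol a,\ol b\notin\<\ow\>$,'' i.e., neither is Green's equivalent to $\ow$ and neither is nil. That stronger fact is what Proposition~\ref{p:coarsening} invokes (``$\app$ agrees with $\til$ on the Green's class of~$w$'') and what Lemma~\ref{l:diffcancel} needs (to land on a partly cancellative, non-nil element). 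Your argument by contradiction only ever uses the membership $\nil\in\<w\>$, so it discards exactly the half of the hypothesis $a,b\notin\<w\>$ that excludes the Green's class of~$\ow$. The paper's phrasing ``precede $w$ modulo Green's relation'' is the point: in a coprincipal quotient the class of $\ow$ is the unique maximal non-nil Green's class, so $a\notin\<\ow\>$ forces $\ol a\prec\ow$.

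Second, your closing remark that ``the hypothesis $a\not\sim b$ is not actually used'' is incorrect. By Lemma~\ref{l:noclosure} the soccular collapse $\app$ \emph{coarsens} $\til$, so $a\approx b$ by itself does not imply $a,b\notin\<w\>$: any $\til$-related pair inside $\<w\>$ is also $\app$-related. It is precisely the hypothesis $a\not\sim b$ that forces the pair to come from the new relations in Definition~\ref{d:soccularcollapse} rather than from $\til$, and only then does one obtain $a,b\notin\<w\>$, which is the starting point of the whole argument (yours included).
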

\begin{proof}
Given Lemma~\ref{l:noclosure}, the definition of~$\app$ ensures that
$a$ and $b$ both preceed $w$ modulo Green's relation, which ensures
$a$ and $b$ are not maximal.
\end{proof}

Lemma~\ref{l:diffcancel} shows that taking the soccular collapse of a
coprincipal congruence does not modify Green's classes.

\begin{lemma}\label{l:diffcancel}
Resuming the notation from Definition~\ref{d:soccularcollapse}, if $a,
b \in Q$ differ by a cancellative modulo $\til$, then soccular
collapse does not join them.
\end{lemma}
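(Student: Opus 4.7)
The plan is to argue by contradiction. Assume $a \not\sim b$ while $a \app b$, and suppose $\ol a = \ol b + \ol c$ in $\oQ = Q/\til$ with $\ol c$ cancellative (the symmetric case $\ol b = \ol a + \ol c$ is handled identically). By Definition~\ref{d:soccularcollapse}, $\ol a + \ol p = \ol b + \ol p$ in $\oQ$ for every $p \in P$, so substituting $\ol a = \ol b + \ol c$ gives
\[
(\ol b + \ol p) + \ol c \;=\; (\ol b + \ol p) + \ol 0
\]
with both $\ol c$ and $\ol 0$ cancellative in $\oQ$. The proof then splits according to whether some $\ol b + \ol p$ is nonnil.

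If $\ol b + \ol p \ne \nil$ for some $p \in P$, then the partial cancellativity of $\ol b + \ol p$, guaranteed by the mesoprimary hypothesis on~$\til$, cancels it from the displayed equation and forces $\ol c = \ol 0$, contradicting $\ol a \ne \ol b$. This case is essentially immediate.

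The obstacle lies in the complementary case, in which $\ol b + \ol p = \nil$ for every $p \in P$. Here the plan is to show that $\ol b$ must itself be a cogenerator of~$\til$, so that coprincipality of $\til$ forces $\<\ol b\> = \<\ol w\>$ in $\oQ$ and hence $b \in \<w\>$, contradicting the defining condition of the soccular collapse. To this end I would exhibit $\ol a$ itself as a key aide for $\ol b$: indeed $\ol a \ne \ol b$, the equality $\ol a + \ol p = \nil = \ol b + \ol p$ holds for every generator $p$ of $P$, and the relation $\ol a = \ol b + \ol c$ places $\ol a$ in the principal ideal $\<\ol b\>$ of $\oQ$, making $\ol a$ maximal in $\{\ol a, \ol b\}$ under Green's preorder. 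With this key aide in hand, $\ol b$ is a key witness every one of whose $P$-translates is nil, hence a cogenerator, and coprincipality of $\til$ closes the argument. The delicate step is producing this key aide, since it is precisely here that the extra Green's information provided by the cancellative~$\ol c$ must be married with the aide condition of Definition~\ref{d:kmcong}(1); the partial cancellativity argument and the invocation of coprincipality are then mechanical.
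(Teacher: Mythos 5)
Your proof is correct and mirrors the paper's argument: the partly-cancellative cancellation in your first case is precisely the paper's proof. Your second case re-derives inline what the paper delegates to the already-proved Lemma~\ref{l:notmax} (that neither $a$ nor $b$ can be maximal in Green's preorder, so some $P$-translate stays non-nil); citing that lemma directly, as the paper does, would let you skip the key-aide/cogenerator/coprincipality detour.
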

\begin{proof}
Suppose $a \approx b$ and $a = b + f$ for some cancellative element
$f$.  For each $p \in P$, $a + p = b + p$ by Lemma~\ref{l:noclosure},
and each is non nil by Lemma~\ref{l:notmax}.  Thus
$b + f + p \sim b + p$, so $f = 0$ by the partly cancellative property
of $b + p$ in Definition~\ref{d:kmcong}.2.
\end{proof}

\begin{prop}\label{p:coarsening}
Fix a $P$-coprincipal congruence~$\til$ on $Q$ with cogenerator~$w$.
The soccular collapse $\app$ of~$\til$ is coprincipal with
cogenerator~$w$, and $\app$ coarsens $\til$.  Moreover, the elements
$a, b \in Q$ distinct under $\til$ but identified under $\app$ are
precisely the key witnesses of~$\til$ lying outside the Green's class
of~$w$.
\end{prop}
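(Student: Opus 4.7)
The plan is to verify the three assertions in turn, building on the preceding lemmas. Lemma~\ref{l:noclosure} already establishes that $\app$ is a congruence coarsening~$\til$, so the work concentrates on (i) showing that $\app$ is $P$-coprincipal with cogenerator~$w$, and (ii) the precise characterization of pairs newly identified by~$\app$.

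For (i), I would check each defining condition of $P$-coprincipality in order. First, $\app$ is $P$-primary: nilpotence of every $p \in P$ modulo $\app$ is inherited from $\til$ since $\app$ is coarser, and cancellativity of $f \in Q \minus P$ modulo $\app$ is by case analysis on how $a + f \approx b + f$ arises---either from $\til$ (cancel $f$ modulo $\til$) or via the new relation (in which case $a + f, b + f \notin \<w\>$ gives $a, b \notin \<w\>$ since $\<w\>$ is an ideal, and cancelling $f$ from $(a+f) + p \sim (b+f) + p$ yields $a + p \sim b + p$ for all $p \in P$, so $a \approx b$). Second, $\app$ is mesoprimary: the partly cancellative property follows directly from Lemma~\ref{l:diffcancel}. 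Third, $w$ remains a cogenerator, since $w + p$ nil modulo $\til$ transfers to $\app$ and $w$ inherits its key witness status through the same key aide. Fourth---the crux---every cogenerator of $\app$ generates the same principal ideal in the localized quotient as~$w$: this is precisely what the soccular collapse enforces, since any key $\til$-witness $a$ outside the Green's class of~$w$ with key aide $b$ satisfies $a, b \notin \<w\>$ and $a + p \sim b + p$ for all $p \in P$, hence $a \approx b$, collapsing the extraneous principal ideals.

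For (ii), both directions come from unpacking definitions. If $a \not\sim b$ and $a \approx b$, then by definition $a, b \notin \<w\>$ and $a + p \sim b + p$ for every $p \in P$; choosing whichever of $\ol a, \ol b$ is maximal in the set $\{\ol a, \ol b\}$ under Green's preorder (or either one, if incomparable) exhibits it as a key aide for the other, so both are key $\til$-witnesses outside the Green's class of~$w$. Conversely, if $a$ is such a key witness with key aide $b$, the $P$-primary dichotomy in $\oQ_P$---an element in $\<\ol w\>$ either equals $\ol w$ up to a unit in $G_P$ or is nil---forces $b \notin \<w\>$, and combined with $a + p \sim b + p$ for all $p \in P$ this yields $a \approx b$ by definition.

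The main obstacle is the fourth coprincipal condition in (i), ensuring that the soccular collapse identifies exactly the stray cogenerators while introducing no new cogenerators in Green's classes different from that of~$w$. This and the dichotomy used in (ii) both depend on reconciling Green's equivalence in the localization $\oQ_P$ with membership in the ideal $\<w\> \subset Q$, which is a delicate local feature of~$\til$ that Lemmas~\ref{l:notmax} and~\ref{l:diffcancel} are designed to control.
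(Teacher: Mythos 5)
Your proof follows the same route as the paper's: Lemma~\ref{l:noclosure} for coarsening, Lemma~\ref{l:diffcancel} for the partly cancellative (hence mesoprimary) property, and Lemma~\ref{l:notmax} to pin down the cogenerator's Green's class; the paper simply states these steps more tersely, while you spell out the $P$-primariness and cogenerator checks, which is fine.

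There is one small imprecision in your forward direction of the final characterization. You pass from ``the maximal one of $\ol a, \ol b$ is a key aide for the other'' to ``so both are key $\til$-witnesses,'' but that inference only holds when $\ol a$ and $\ol b$ are Green's-incomparable (so that each is maximal in $\{\ol a,\ol b\}$ and hence each serves as a key aide for the other). Incomparability does hold, but it needs a word: if $\ol a = \ol b + \ol f$ with $f$ cancellative, Lemma~\ref{l:diffcancel} forbids $a\approx b$; if instead $f\in P$, then $a+p\sim b+p$ for all $p$ combined with nilpotence of $P$ in $\oQ$ forces $\ol a$ to be nil, contradicting Lemma~\ref{l:notmax}. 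The paper's one-sentence dispatch of this claim elides the same point, so this is a minor clarification rather than a change of approach.
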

\begin{proof}
The congruence~$\app$ coarsens $\til$ by Lemma~\ref{l:noclosure}.  As
$\til$ is mesoprimary, Lemma~\ref{l:diffcancel} ensures that $\app$ is
also mesoprimary, and by Lemma~\ref{l:notmax} $\app$ agrees with
$\til$ on the Green's class of~$w$.  The final claim follows upon
observing that $a$ and $b$ are by definition key witnesses for $\til$.
\end{proof}

\begin{defn}\label{d:keypair}
Fix a $P$-coprincipal congruence~$\til$ on $Q$.  Two distinct key
witnesses $a, b \in Q$ for $\til$ form a \emph{key witness pair} if
each is a key aide for the other.
\end{defn}

\begin{remark}\label{r:protected}
If $a,b \in Q$ form a key witness pair under a coprincipal
congruence~$\til$ and neither of them is Green's equivalent to the
cogenerator $w$, then they are no longer a key witness pair under the
soccular collapse $\app$ of~$\til$ by Proposition~\ref{p:coarsening}.
However, $\app$ may still have key witnesses, as shown in
Example~\ref{e:double}.
\end{remark}

\begin{example}\label{e:double}
Let
$I = \<x^3 - x^2y, x^2y - xy^2, xy^3 - y^4, x^5\> \subset \kk[x,y]$.
The congruence~$\til_I$ and its soccular collapse are shown in
Figure~\ref{fig:ex302}.  The monoid element $xy$ is a key witness for
$\til_I$, where it is paired with $y^2$, as well as for the soccular
collapse of~$\til_I$, where it is paired with $x^2$.
\end{example}

\begin{figure}[tbp]
\begin{center}
\includegraphics[width=1.5in]{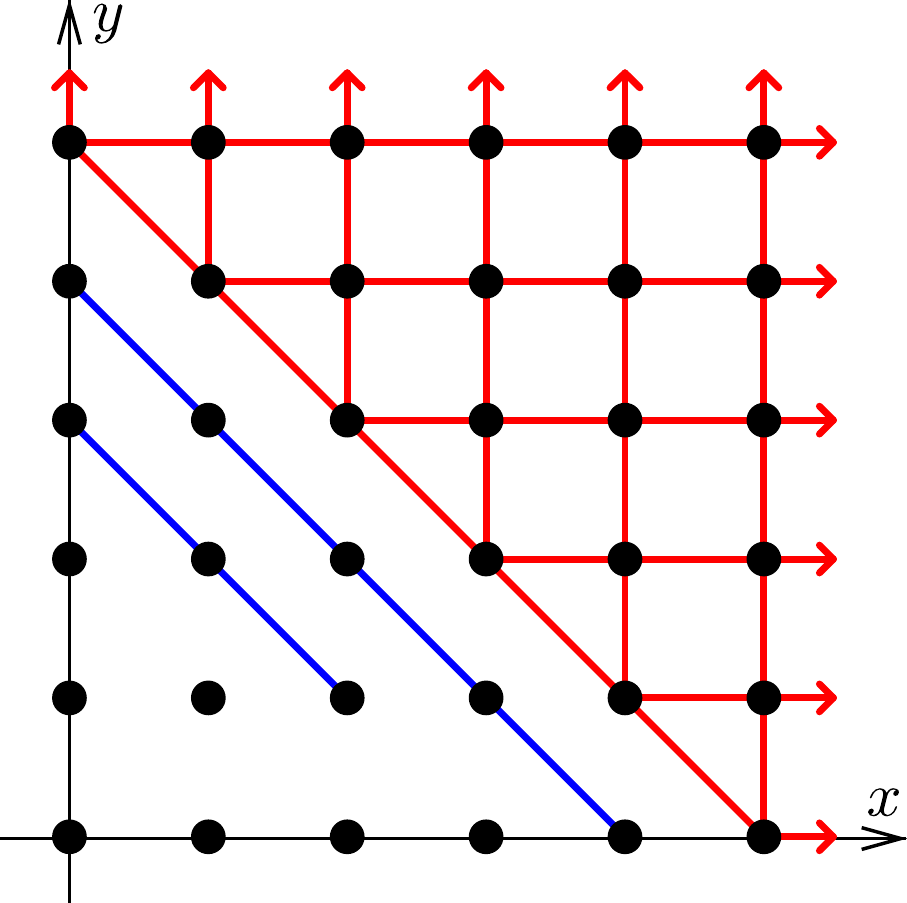}
\hspace{1in}
\includegraphics[width=1.5in]{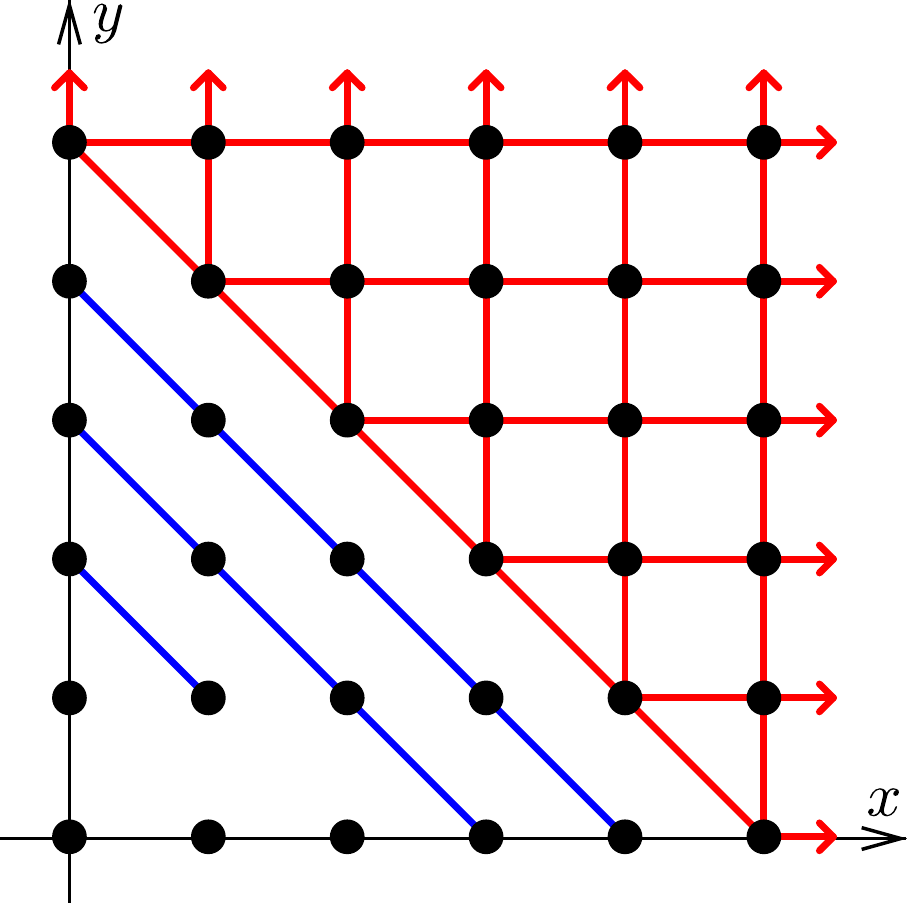}
\end{center}
\caption[A key witness that is also a protected witness]{For $I =
\<x^3 - x^2y, x^2y - xy^2, xy^3 - y^4, x^5\> \subset \kk[x,y]$, the
congruence induced by~$I$ on~$\NN^2$ (left), and its soccular collapse
(right).  The monomial $xy$ is a key witness for both $\til_I$ and
its~\mbox{soccular collapse}.}
\label{fig:ex302}
\end{figure}

\begin{defn}\label{d:protected}
Fix a coprincipal congruence~$\til$ on $Q$ with cogenerator $w$.  An
element $a \in Q$ is a \emph{protected witness for $\til$} if it is a
key witness for the $i$th soccular collapse of~$\til$ for some
$i \ge 1$.  Elements $a, b \in Q$ form a \emph{protected witnesses
pair} if they form a key witness pair for some iterated soccular
collapse of $\til$.
\end{defn}

\begin{defn}\label{d:soccularclosure}
Fix a coprincipal congruence~$\til$ on $Q$.  The \emph{soccular
closure} $\ol\til$ of~$\til$ is the congruence refined by $\til$ that
additionally joins any $a$ and $b$ related under some soccular
collapse of~$\til$.
\end{defn}

\begin{lemma}\label{l:soccularclosure}
Fix a coprincipal congruence~$\til$ on $Q$ with cogenerator $w$.  
The soccular closure $\ol\til$ of~$\til$ is a soccular congruence, 
and its set of key witnesses is exactly the Green's class of~$w$.  
\end{lemma}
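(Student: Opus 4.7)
The plan is to realize $\ol\til$ as the stable value $\til^{(N)}$ of the nested ascending chain of iterated soccular collapses $\til = \til^{(0)} \subseteq \til^{(1)} \subseteq \cdots$, which exists because $Q$ is Noetherian. By iterated application of Proposition~\ref{p:coarsening}, each $\til^{(i)}$ is $P$-coprincipal with cogenerator~$w$, so the stable limit $\ol\til = \til^{(N)}$ has these properties. This identification matches Definition~\ref{d:soccularclosure}, since the collapses are nested congruences whose union, as an equivalence relation, is the stable value; in particular, $\ol\til$ is a genuine congruence coarsening $\til$.

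To verify that $\ol\til$ is soccular, I would apply Proposition~\ref{p:coarsening} at the stable stage $\til^{(N)}$. Its final assertion identifies the pairs newly collapsed by one step of soccular collapse as precisely the key witnesses of the input congruence lying outside the Green's class of~$w$. Since no further identifications occur at $\til^{(N)}$, there are no key witnesses of $\ol\til$ outside the Green's class of~$w$; equivalently, every key witness of $\ol\til$ generates the principal ideal of~$w$ in $\oQ_P/\ol\til$, which is the defining condition of soccularity.

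For the equality of the set of key witnesses with the Green's class of~$w$, the forward inclusion is exactly the soccularity just proved. For the reverse, I would observe that $w$ is itself a cogenerator of $\ol\til$ (by the coprincipality established above), and that any $a \in Q$ with $\ol a$ Green's-equivalent to~$\ow$ in $\oQ_P/\ol\til$ admits an expression $a + s \sim_{\ol\til} w + t$ for some $s, t \in Q \minus P$. Then for each generator $g \in P$, the relation $w + g \sim_{\ol\til} q + g$ coming from a key aide $q$ of~$w$ produces $a + s + g \sim_{\ol\til} q + t + g$, and the partly cancellative property of the mesoprimary~$\ol\til$ (invoked exactly as in Lemma~\ref{l:diffcancel}) together with the maximality clause of Definition~\ref{d:kmcong}.1 allows one to descend this to a key aide for~$a$ itself inside~$Q$. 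The main anticipated obstacle is precisely this descent: naively one would invert~$s$ to extract a witness in~$\oQ_P$, but the key aide must live in~$Q$, so one has to combine partly cancellative arguments with the maximality verification, just as in the proof of Proposition~\ref{p:coarsening}.
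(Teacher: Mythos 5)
Your overall framework—realizing $\ol\til$ as the stable value of the chain of iterated soccular collapses, invoking Proposition~\ref{p:coarsening} at each step to retain coprincipality with cogenerator~$w$, and reading off soccularity from the stabilization—matches the paper's approach, and your treatment of the forward inclusion (no key witnesses outside the Green's class of~$\ow$) is correct.

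The reverse inclusion, however, is where you overcomplicate and the argument as sketched develops a gap. You want to transfer a key aide of~$w$ to~$a$ by writing $a + s \sim_{\ol\til} w + t$, adding a generator $g \in P$ to the key-aide relation $w + g \sim_{\ol\til} q + g$, and then invoking partly cancellativity as in Lemma~\ref{l:diffcancel}.  But $w$ is a cogenerator, so $w + g$ is nil modulo~$\ol\til$ for every $g \in P$; hence $q + g$, $a + s + g$, and $q + t + g$ are all nil, and partly cancellativity (which by definition requires a \emph{non-nil} common value) gives you nothing.  The ``descent'' step therefore cannot proceed as stated.  The actual argument is much simpler and does not need Lemma~\ref{l:diffcancel} at all: once one knows $\ol\til$ is coprincipal with cogenerator~$w$ (which your Proposition~\ref{p:coarsening} iteration already establishes), any $a$ with $\ol a$ Green's equivalent to $\ow$ satisfies $\ol a + \ol p = \ow + \ol u + \ol p = \nil$ for every $p \in P$, because the unit~$\ol u$ does not alter the nil class.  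Thus the nil class itself is a key aide for~$a$ (it is distinct from~$\ol a$, has the same image under every $\ol p$, and is maximal in Green's preorder), so~$a$ is a key witness—indeed a cogenerator.  No inversion of~$s$ or comparison across~$\oQ_P$ and~$Q$ is required.
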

\begin{proof}
By construction, the soccular closure has no key witnesses outside
the Green's class of~$w$.
\end{proof}

We now characterize protected witnesses and give a non-iterative way
to specify the soccular closure.  To this end, let
$$%
  (w :_{\til} q) = \{p \in Q \mid \ol q + \ol p = \ow \text{ in }
  Q_P/\til\}.
$$

\begin{thm}\label{t:protected}
Fix a $P$-coprincipal congruence~$\til$ on $Q$ with cogenerator $w$,
and write $\oQ = Q/\til$.  Then $q, q' \in Q$ with distinct classes in
$\oQ$ are a protected witnesses pair for~$\til$ if and only if
$(w :_{\til} q) = (w :_{\til} q')$.
\end{thm}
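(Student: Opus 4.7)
The plan is to use a preliminary invariance: for every $q \in Q$ and every $i \geq 0$, $(w :_\til q) = (w :_{\til^{(i)}} q)$.  This holds because Proposition~\ref{p:coarsening} says soccular collapses identify only pairs lying outside $\<w\>$, while $w \in \<w\>$, so an identification $q + r \til^{(i)} w$ must already hold under $\til$.  Hence colon ideals may be computed with respect to any convenient iterated soccular collapse.

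For the forward direction, suppose $(q, q')$ is a key witness pair under $\til^{(i)}$, which remains $P$-coprincipal with cogenerator $w$ by Proposition~\ref{p:coarsening}.  The key aide condition at generators of $P$, extended to all of $P$ by the monoid closure argument in Lemma~\ref{l:noclosure}, gives $q + p \til^{(i)} q' + p$ for every $p \in P$.  For $r \in (w :_\til q) = (w :_{\til^{(i)}} q)$: if $r \in P$, then $q' + r \til^{(i)} q + r \til^{(i)} w$ directly, so $r \in (w :_{\til^{(i)}} q')$.  If $r \notin P$, then $r$ is a unit in $Q_P$, forcing $\ol q$ into the Green's class of $\ow$ in $\oQ_P$ modulo $\til^{(i)}$; the same aide relations make $\ol{q'+p}$ nil for every $p \in P$, so $\ol{q'}$ is also a cogenerator, and the key aide maximality together with the partly cancellative property in Definition~\ref{d:kmcong}(2) pins $\ol q$ and $\ol{q'}$ to the same class in $\oQ_P$ modulo $\til^{(i)}$, yielding $r \in (w :_{\til^{(i)}} q')$.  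Symmetry completes the equality.

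For the converse, assume $(w :_\til q) = (w :_\til q')$ with $\ol q \neq \ol{q'}$ in $\oQ$.  A direct computation shows this equality is translation-invariant: $(w :_\til q+s) = (w :_\til q'+s)$ for every $s \in Q$.  By Noetherianity the ascending chain $\til \subseteq \til^{(1)} \subseteq \cdots$ stabilizes at some $\til^{(N)}$; combining translation-invariance with the preliminary invariance of colon ideals forces $q \til^{(N)} q'$, for otherwise a suitable translate $q+s, q'+s$ would become distinct cogenerators in different cosets of $K_w$ modulo $\til^{(N)}$, contradicting the colon equality.  Letting $i_0 \geq 1$ be minimal with $q \til^{(i_0)} q'$, at stage $\til^{(i_0-1)}$ the elements $q, q'$ are distinct, lie outside $\<w\>$, and satisfy $q + p \til^{(i_0-1)} q' + p$ for every $p \in P$; the translation-invariance also precludes one strictly dividing the other in the Green's preorder modulo $\til^{(i_0-1)}$, so the maximality clause of Definition~\ref{d:kmcong}(1) holds, exhibiting $(q, q')$ as a key witness pair for $\til^{(i_0-1)}$.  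The main obstacle is the $r \notin P$ case of the forward direction, where the key aide maximality and partly cancellative structure must be carefully combined to identify $\ol q$ and $\ol{q'}$ in $\oQ_P$ modulo $\til^{(i)}$ even though they are distinct in $\oQ$.
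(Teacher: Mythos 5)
Your preliminary invariance $(w :_{\til} q) = (w :_{\til^{(i)}} q)$ for all $i$ is sound and is essentially the paper's opening move: since soccular collapse never disturbs the Green's class of $w$, the colon against $w$ is stable under passing to the soccular closure $\ol\til$. The rest of your argument, however, diverges from the paper's route and has genuine gaps.

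For the forward direction, the paper needs no case split on $r \in P$ versus $r \notin P$: a key witness pair for $\til^{(i)}$ lying outside $\<w\>$ becomes \emph{merged} under $\til^{(i+1)}$ (Definition~\ref{d:soccularcollapse} and Proposition~\ref{p:coarsening}), hence under $\ol\til$, and then $(w :_{\til} q) = (w :_{\ol\til} q) = (w :_{\ol\til} q') = (w :_{\til} q')$ is immediate from your own invariance. Your $r \in P$ subcase recovers a piece of this, but your $r \notin P$ subcase does not actually produce $r \in (w :_{\til^{(i)}} q')$: you argue that $\ol q$ and $\ol{q'}$ get ``pinned to the same class in $\oQ_P$,'' but a key witness pair is by definition a pair of \emph{distinct} classes, so the deduction cannot terminate there without more explanation. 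You flag this yourself as ``the main obstacle'' and leave it open, so the forward direction is not complete. Switching to the merging argument removes the obstacle entirely.

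For the converse, the paper is contrapositive and quantitative: assuming $q, q'$ are not identified under $\ol\til$, it picks $p \in P$ with $\ol p$ maximal among elements of $P$ for which $q + p$ and $q' + p$ remain distinct under $\ol\til$ (finitely many Green's classes of $\oQ_P$ guarantee existence), and then shows exactly one of $q + p$, $q' + p$ is nil while the other is Green's equivalent to $w$, so that this very $p$ lies in one colon but not the other. Your replacement --- ``translation-invariance plus Noetherian stabilization forces $q \til^{(N)} q'$, else a suitable translate $q + s$, $q' + s$ would become distinct cogenerators in different cosets of $K_w$'' --- misdescribes what actually happens at the maximal translate: the two images do \emph{not} both become cogenerators; one becomes nil, and that asymmetry is exactly what separates the colons. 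As written your translation-invariance step neither names the separating element nor explains why ``different $K_w$-cosets'' would contradict colon equality, so the converse is also unestablished. I would replace both halves with the paper's merging/maximality argument.
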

\begin{proof}
Let $\ol\til$ denote the soccular closure of~$\til$.  Since passing
to~$\ol\til$ leaves the class of~$w$ under~$\til$ unchanged,
$(w :_{\til} q) = (w :_{\ol\til} q)$ for all $q \in Q$.  Therefore, if
$q$ and~$q'$ are merged under~$\ol\til$, the sets $(w :_{\til} q)$ and
$(w :_{\til} q')$ coincide.

Now assume $q$ and~$q'$ are not related under~$\ol\til$.  Pick an
element $p \in P$ such that $q + p$ and $q' + p$ are distinct
under~$\ol\til$ and such that the image $\ol p \in \oQ$ is maximal
among images of elements in~$P$ with this property.  Existence of~$p$
is guaranteed because $\til$ is primary, whence $\oQ_P$ has only
finitely many Green's classes.  Maximality of~$p$ implies that $q + p$
and $q' + p$ become merged in $\oQ/\ol\til$ under the action of any
element of~$P$.  Since $\ol\til$ has no key witness pairs, one
of~$q + p$ and $q' + p$ must be nil, and maximality of~$p$ implies the
other is Green's equivalent to $w$.  After possibly switching $q$ and
$q'$, this gives $p \in (w :_{\til} q)$ but
$p \notin (w :_{\til} q')$.
\end{proof}

\begin{cor}\label{c:protected}
Fix a coprincipal congruence~$\til$ on $Q$ cogenerated by $w$.  The
soccular closure $\ol\til$ of~$\til$ relates $a$ and $b$ if and only
if $(w :_{\til} a) = (w :_{\til} b)$.  \qed
\end{cor}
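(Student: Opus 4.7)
The plan is to derive this corollary as an immediate consequence of Theorem~\ref{t:protected}, with a short case analysis separating the trivial situation from the substantive one.

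First I would handle the case $a \sim b$ under $\til$. Here $a \sim_{\ol\til} b$ is automatic, since $\ol\til$ coarsens $\til$ (iterating Lemma~\ref{l:noclosure}), and $(w :_\til a) = (w :_\til b)$ holds because the set $(w :_\til \cdot)$ depends only on the image of its argument in $\oQ = Q/\til$. So both sides of the asserted equivalence are true in this case.

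The remaining case has $a$ and $b$ with distinct classes in~$\oQ$. Here the goal is to show that $a \sim_{\ol\til} b$ is equivalent to $(a,b)$ being a protected witness pair in the sense of Definition~\ref{d:protected}, after which Theorem~\ref{t:protected} immediately identifies this with the equality $(w :_\til a) = (w :_\til b)$. To establish that equivalence, I would apply Proposition~\ref{p:coarsening} inductively: at each stage of the iterated soccular collapse, the pairs of previously distinct elements that are newly identified are exactly the key witness pairs for the preceding coprincipal congruence, taken outside the Green's class of~$w$. Appealing to the definition of protected witness pair and tracking the iteration, the pairs merged at some stage of the iterated collapse of~$\til$ are precisely the protected witness pairs, and conversely.

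The main technical point to watch is that forming $\ol\til$ as the union of relations from all iterated soccular collapses does not introduce additional merges by transitive closure beyond those witnessed in a single iterate. This is taken care of by Lemma~\ref{l:noclosure}, which says each soccular collapse is already a congruence, together with the Noetherianity of~$Q$, which forces the sequence of soccular collapses to stabilize; thus $\ol\til$ coincides with that stabilized iterate, and no further closure is needed. Combining the two cases yields the corollary.
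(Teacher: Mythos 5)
Your proposal is correct and follows essentially the same approach as the paper. The one expository remark worth making: the paper's proof of Theorem~\ref{t:protected} actually works throughout with the soccular closure~$\ol\til$ rather than with protected witness pairs per se (the two notions are identified, silently, by the same iterative application of Proposition~\ref{p:coarsening} that you spell out), so the corollary is literally what that proof establishes and hence carries a bare~\qedsymbol; your argument, deriving the corollary from the \emph{statement} of Theorem~\ref{t:protected} by making the protected-pair~$\leftrightarrow$~$\ol\til$-merged identification explicit (and handling the trivial case $a \sim b$ and the transitive-closure/stabilization point via Lemma~\ref{l:noclosure} and Noetherianity), is the natural reconstruction and is sound.
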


\section{Soccular decomposition of congruences}\label{s:congdecomp}

Every congruence can be expressed as a common refinement of soccular
congruences.  Our constructive proof first produces the decomposition
in Corollary~\ref{c:socculardecomp}, which might not be a mesoprimary
decomposition; see Remark~\ref{r:soccularmesodecomp}.
Theorem~\ref{t:oneshotsocc} removes unnecessary components and shows
that the resulting decomposition is mesoprimary.

\begin{defn}\label{d:socculardecomp}
Fix a $P$-coprincipal congruence~$\til$ on $Q$ and a key witness $w
\in Q$.  The \emph{soccular component $\ol\til_w^P$ of~$\til$
cogenerated by $w$ along $P$} is the soccular closure of the
coprincipal component $\til_w^P$ cogenerated by $w$ along $P$.
\end{defn}

\begin{thm}\label{t:socculardecomp} 
Any coprincipal congruence~$\til$ on $Q$ is the common refinement 
of the soccular components cogenerated by its protected witnesses.  
\end{thm}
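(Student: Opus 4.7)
The plan is to establish both containments of $\sim = \bigcap_w \overline{\sim}_w^P$, with $w$ ranging over protected witnesses and $P$ over their associated primes. For the refinement direction, I fix a protected witness $w$ with prime $P$ and show $\sim$ refines $\overline{\sim}_w^P$: by Definition~\ref{d:socculardecomp}, $\overline{\sim}_w^P$ is the soccular closure of the coprincipal component $\sim_w^P$; inspection of Definition~\ref{d:kmcong}(3) shows that $\sim$ refines $\sim_w^P$ (the principle underlying Theorem~\ref{t:kmcong}), while Definition~\ref{d:soccularclosure} makes $\overline{\sim}_w^P$ a coarsening of $\sim_w^P$. Chaining these yields the refinement, and intersecting over all protected witnesses gives $\sim \subseteq \bigcap_w \overline{\sim}_w^P$.

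For the reverse containment, suppose $a \not\sim b$. I will use Corollary~\ref{c:kmcong} to locate a prime $P \subset Q$ and an element $u \in Q$ such that, after possibly swapping $a$ and $b$, the element $a + u$ is a key $\sim$-witness for $P$ with key aide $b + u$. Setting $w := a + u$, I invoke Corollary~\ref{c:protected} applied to $\sim_w^P$, whose cogenerator is $w$: the elements $a + u$ and $b + u$ are $\overline{\sim}_w^P$-related exactly when the colon sets $(w :_{\sim_w^P} (a + u))$ and $(w :_{\sim_w^P} (b + u))$ coincide. The zero element lies in the first trivially, while $a + u \not\sim_w^P b + u$ (because $\sim$ refines $\sim_w^P$ and $a + u \not\sim b + u$) puts it outside the second. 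Since the colon sets differ, $a + u$ and $b + u$ are not related under $\overline{\sim}_w^P$, and because $\overline{\sim}_w^P$ is a congruence, neither are $a$ and $b$, as required.

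The main obstacle will be confirming that $w = a + u$ qualifies as a protected witness in the sense of Definition~\ref{d:protected}, i.e., a key witness of the $i$th iterated soccular collapse $\sim^{(i)}$ for some $i \geq 1$. When $a + u$ is Green's-equivalent to a cogenerator of $\sim$, Proposition~\ref{p:coarsening} ensures $a + u$ remains a cogenerator, and hence a key witness, of every iterate, so is protected automatically. Otherwise $a + u$ and $b + u$ merge in $\sim^{(1)}$, and the technical task is to track the merged class through subsequent collapses and show that it persists as a key witness at some stage, mirroring the behaviour in Example~\ref{e:double} where the merged class acquires a new key-aide partner, or else to pick an alternative protected witness whose soccular component still separates $a + u$ from $b + u$. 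Orchestrating this case analysis using Lemmas~\ref{l:noclosure}--\ref{l:diffcancel} and Proposition~\ref{p:coarsening} is the chief technical burden of the proof.
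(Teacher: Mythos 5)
The proposal takes a different, more cumbersome route than the paper and ends with an admitted, unresolved gap.

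Your first paragraph (each soccular component coarsens $\til$, via Lemma~\ref{l:noclosure}) is correct and matches the paper. The trouble begins in the reverse direction. You reach for Corollary~\ref{c:kmcong}, which is the tool the paper reserves for Theorem~\ref{t:oneshotsocc}, where $\til$ is an \emph{arbitrary} congruence. Here $\til$ is already $P$-coprincipal, and that extra structure is exactly what lets the paper's argument stay local: take a cogenerator $w$ of~$\til$ and look at the single soccular component $\ol\til_w^P$, which is just the soccular closure $\ol\til$ of~$\til$ itself. If $\ol\til$ leaves $a$ and $b$ distinct, that component already separates them. If instead $\ol\til$ merges $a$ and~$b$, then by Theorem~\ref{t:protected}/Corollary~\ref{c:protected} the elements $a$ and~$b$ are \emph{themselves} a protected witness pair, and the soccular component cogenerated by~$a$ sends $b$ to nil (since $a$ and $b$ are Green's-incomparable by Lemma~\ref{l:diffcancel}), separating them. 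Every component used is cogenerated by a protected witness, so the decomposition closes.

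Your version has two concrete problems. First, a local logical error: from ``$\sim$ refines $\sim_w^P$'' and ``$a+u \not\sim b+u$'' you infer ``$a+u \not\sim_w^P b+u$,'' but refinement gives the implication $a\sim b \Rightarrow a\sim_w^P b$, whose contrapositive runs the other way; being distinct under the finer congruence does not preclude being merged by the coarser one. (The claim $a+u \not\sim_w^P b+u$ is in fact true, because $a+u$ is the cogenerator of $\sim_w^P$ while $b+u$ is either nil or a distinct Green's-equivalent class, but it needs the argument from Definition~\ref{d:kmcong}(3), not refinement.) Second, and more seriously, you correctly identify but do not resolve the gap that $w=a+u$ need only be a \emph{key} $\til$-witness, not a \emph{protected} one, so $\ol\sim_w^P$ may simply not appear in the intersection being analyzed. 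That is not a small technicality you can wave at: Remark~\ref{r:protected} and Example~\ref{e:double} illustrate that a key witness pair can dissolve after one soccular collapse, so there is no reason the witness produced by Corollary~\ref{c:kmcong} survives to be protected. The fix is not to patch this route but to abandon Corollary~\ref{c:kmcong} here entirely and argue directly about $a$, $b$, and the cogenerator as the paper does; in that argument the elements whose components you invoke are protected by construction, so the issue never arises.
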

\begin{proof}
Each soccular component coarsens $\til$ by Lemma~\ref{l:noclosure}, so
it suffices to show that their common refinement is $\til$.  Let
$w \in Q$ denote a cogenerator of~$\til$ and fix distinct
$a, b \in Q$.  If the soccular component of~$\til$ at $w$ (that is,
the soccular closure of~$\til$) leaves $a$ and $b$ distinct, we are
done.  Otherwise, both $a$ and $b$ are protected witnesses, and the
soccular component of~$\til$ at $a$ joins $b$ with the nil class.
\end{proof}

\begin{cor}\label{c:socculardecomp} 
Any congruence~$\til$ on $Q$ can be expressed as a common refinement
of soccular congruences.
\end{cor}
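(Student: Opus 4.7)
The plan is to combine the two decomposition results already in hand in a simple two-stage refinement. First, by Theorem~\ref{t:kmcong}, any congruence~$\til$ on~$Q$ is the common refinement of the coprincipal components $\til_w^P$ cogenerated by its key witnesses~$w$ for the various monoid primes~$P \subset Q$. Second, each such component $\til_w^P$ is itself a $P$-coprincipal congruence, so Theorem~\ref{t:socculardecomp} applies to it and expresses it as the common refinement of the soccular components of~$\til_w^P$ cogenerated by the protected witnesses of~$\til_w^P$. By Definition~\ref{d:socculardecomp} together with Lemma~\ref{l:soccularclosure}, each of these soccular components is indeed a soccular congruence, so collecting them across all~$w$ and~$P$ exhibits~$\til$ as a common refinement of soccular congruences.

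The only formal point to check is that common refinement is transitive: if $\til = \bigcap_i \app_i$ and each $\app_i = \bigcap_j \app_{i,j}$, then $\til = \bigcap_{i,j} \app_{i,j}$. This is immediate from the definition of common refinement as the finest congruence coarsened by every congruence in the given family, applied twice. Consequently there is no real obstacle: the present corollary is a direct composition of Theorems~\ref{t:kmcong} and~\ref{t:socculardecomp}, and the second application is needed precisely because a coprincipal congruence need not be soccular in general (as Example~\ref{e:congsocle} already demonstrates at the level of ideals).
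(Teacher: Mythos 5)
Your proof is correct and follows exactly the paper's approach: the paper's proof is the one-line instruction ``Apply Theorem~\ref{t:kmcong} to $\til$, then Theorem~\ref{t:socculardecomp} to each component,'' which is precisely your two-stage refinement. Your added remarks on transitivity of common refinement and on why the soccular components are themselves soccular congruences (via Definition~\ref{d:socculardecomp} and Lemma~\ref{l:soccularclosure}) are accurate elaborations of what the paper leaves implicit.
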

\begin{proof}
Apply Theorem~\ref{t:kmcong} to $\til$, then
Theorem~\ref{t:socculardecomp} to each component.
\end{proof}

\begin{remark}\label{r:soccularmesodecomp}
The decomposition in Corollary~\ref{c:socculardecomp} is not
necessarily a mesoprimary decomposition in the sense of
Definition~\ref{d:kmmesodecomp}, since the associated prime congruence
of a component $\app$ cogenerated at a protected witness $q \in Q$
need not coincide with the prime congruence at $q$ under $\til$.  The
next theorem shows that the components in this decomposition
cogenerated at protected witnesses that are not key $\til$-witnesses
are redundant, and the resulting decomposition is indeed a mesoprimary
decomposition.
\end{remark}

\begin{thm}\label{t:oneshotsocc}
Any congruence~$\til$ is the common refinement of the soccular
components cogenerated by its key witnesses.
\end{thm}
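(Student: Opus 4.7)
My plan is to handle the two refinement directions separately. The easy direction is that each soccular component $\ol\til_w^P$ at a key $\til$-witness $w$ coarsens $\til$: the coprincipal component $\til_w^P$ coarsens $\til$ by Theorem~\ref{t:kmcong}, and iterating Lemma~\ref{l:noclosure} shows $\ol\til_w^P$ coarsens it further. So the common refinement $\app$ of these components coarsens $\til$.

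For the converse, given $a \not\sim b$, I invoke Corollary~\ref{c:kmcong} to obtain a prime $P$ and an element $u \in Q$ such that, after possibly swapping $a$ and $b$, the element $w := a + u$ is a key $\til$-witness for $P$ with key aide $b + u$. The plan is to show that this single soccular component $\ol\til_w^P$ separates $a$ and $b$, which suffices. By Corollary~\ref{c:protected}, it is enough to exhibit some $p \in (w :_{\til_w^P} a) \minus (w :_{\til_w^P} b)$. The element $u$ itself is the natural candidate, since $\ol a + \ol u = \ol w$ holds already in $\oQ$; the task then reduces to showing $u \notin (w :_{\til_w^P} b)$, i.e., that $\overline{a+u}$ and $\overline{b+u}$ remain distinct in $Q_P/\til_w^P$.

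A preliminary calculation reduces this nonidentification to showing $\overline{a+u} \neq \overline{b+u}$ already in the localization $\oQ_P$: case (a) of Definition~\ref{d:kmcong}.3 is ruled out because $\ol w$ lies in the principal ideal of $\overline{a+u} = \ol w$, and case (b) requires $\ol c \in \oQ_P$ with $\ol w + \ol c = \overline{b+u} + \ol c = \ol w$, forcing $\ol c \in K_w^P$ (the stabilizer of $\ol w$) and thence $\overline{b+u} = \ol w + (-\ol c) = \ol w$ in $\oQ_P$. The main obstacle is therefore to verify, for the choice of $P$ and $u$ from Corollary~\ref{c:kmcong}, that no $f \in Q \minus P$ makes $(a+u) + f \sim (b+u) + f$ in $\til$. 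My plan is to rule this out by exploiting the key aide structure: such an $f$ would collapse the witness $a+u$ with its key aide $b+u$ in the localization at $P$, which combined with the Noetherianity of $Q$ should allow either iterating Corollary~\ref{c:kmcong} with a larger prime containing $f$, or invoking an ascending chain argument on primes, to derive a contradiction with the assumed key aide relation between $a+u$ and $b+u$. Pinning down this last step is, I expect, the technical heart of the proof.
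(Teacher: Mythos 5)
Your overall architecture matches the paper's: both proofs invoke Corollary~\ref{c:kmcong} to produce $P$, $u$, and the key witness $w=a+u$ with key aide $b+u$, and both then reduce to Corollary~\ref{c:protected}, i.e.\ to exhibiting an element of $(w:_{\til_w^P}a)\minus(w:_{\til_w^P}b)$, with $u$ as the natural candidate. Your reduction of $u\notin(w:_{\til_w^P}b)$ to the claim $\overline{a+u}\neq\overline{b+u}$ in $\oQ_P$ (by unwinding conditions (a) and (b) of Definition~\ref{d:kmcong}.3 with $\overline{a+u}=\ow$) is also sound, though your intermediate assertion that case~(b) ``forces $\ol c\in K_w^P$'' is not quite right --- the element $\ol c$ in condition~(b) need not be a unit --- but the conclusion still follows since $\ow+\ol c=\ow$ and $\overline{b+u}=\ow+\ol g$ for a unit $\ol g$ together give $\ow+\ol g=(\ow+\ol c)+\ol g=\ow$.

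The genuine gap is the one you flag yourself: you stop short of proving $\overline{a+u}\neq\overline{b+u}$ in $\oQ_P$, and the plan you sketch (enlarging the prime, an ascending-chain argument on primes) is speculative and not carried out. The paper does not argue this way. Instead it reads Corollary~\ref{c:kmcong} more strongly: the surrounding text states explicitly that the corollary is extracted from the \emph{proof} of \cite[Theorem~8.4]{mesoprimary}, and the proof of Theorem~\ref{t:oneshotsocc} uses the conclusion $a\not\sim_w^P b$ --- that the coprincipal component $\til_w^P$ at the produced witness already separates $a$ from $b$ --- as part of what the corollary delivers. From there the argument is short: since $\ol\til_w^P$ has the same cogenerator and nil class as $\til_w^P$, Corollary~\ref{c:protected} applies. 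So the missing ingredient in your proposal is not a new ascending-chain argument; it is the stronger output of Corollary~\ref{c:kmcong} (separation by the coprincipal component at the witness it names), which is exactly what closes the loop.
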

\begin{proof}
For elements $a, b \in Q$ with $a \not\sim b$,
Corollary~\ref{c:kmcong} produces, after possibly swapping $a$ and
$b$, a prime $P \subset Q$ and $u \in Q$ such that $a \not\sim_w^P b$
for a key witness $w = a + u$ with key aide $b + u$.  Since
$\ol\til_w^P$ has the same cogenerator and nil class as $\til_w^P$,
Corollary~\ref{c:protected} ensures that $\ol\sim_w^P$ does not relate
$a$ and $b$ as well.
\end{proof}

\section{Binoccular decomposition of binomial ideals}\label{s:idealdecomp}

The binomial ideal analogue (Theorem~\ref{t:binocculardecomp}) of
soccular decomposition (Theorem~\ref{t:oneshotsocc}) yields a
decomposition into binoccular ideals (Definition~\ref{d:binoccular}),
each of whose socles contains a monomial cogenerator and no two-term
binomials other than linear combinations of monomial cogenerators.
Due to the possibility of non-binomials in the socle, binoccular
decomposition is not irreducible decomposition, but it is the best
approximation that does not exit the class of binomial ideals.  As
with coprincipal decomposition, the relevant witnesses are essential
witnesses rather than key witnesses.

For any monoid prime ideal $P\subset Q$, let $\mm_P = \<\ttt^p:p\in
P\>$.  In general, a monoid prime ideal~$P$ in a subscript denotes
monomial localization, which arises from inverting all monomials
outside of~$\mm_P$.  (This notation was used in
\cite[Section~11]{mesoprimary}.)

\begin{defn}\label{d:binoccular}
Fix a binomial ideal $I \subset \kk[Q]$ and a prime monoid ideal $P
\subset Q$.  The \emph{$P$-socle of~$I$} is the ideal
$$%
  \soc_P(I) = \{f \in \kk[Q]_P/I_P \mid \mm_P f = 0 \} \subset
  \kk[Q]_P/I_P.
$$
A binomial ideal $I \subset \kk[Q]$ is \emph{binoccular} if it is
$P$-coprincipal and every monomial appearing in each binomial in
$\soc_P(I)$ is a monomial cogenerator of~$\kk[Q]_P/I_P$.
\end{defn}

\begin{example}\label{e:nonsoccularbinoccular}
Binoccular ideals need not induce soccular congruences.  The ideal
$I = \<x^2 - xy, xy + y^2\>$ is $\<x,y\>$-coprincipal since it
contains all monomials of degree 3.  The monomials $x$ and $y$ form a
key witness pair for $\til_I$, but $I$ is irreducible, so these
monomials do not form a binomial socle element.
\end{example}

Example~\ref{e:nonsoccularbinoccular} implies that the witness
protection program in Section~\ref{s:soccular} cannot be expected to
lift directly to the arithmetic setting, in the sense that collapsing
the congruence $\til_I$ combinatorially need not reflect an operation
on~$I$ itself.  Nonetheless, the analogous arithmetic collapse is
easily defined and has the desired effect.

\begin{defn}\label{d:binoccularclosure}
Fix a $P$-coprincipal binomial ideal $I \subset \kk[Q]$ cogenerated by
$w \in Q$.  The \emph{(first) binoccular collapse of~$I$} is the ideal
$$%
  I_1 = \<\ttt^a - \lambda\ttt^b \mid \ttt^p(\ttt^a -
  \lambda\ttt^b) \in I \text{ for all } p \in P\>
$$
and the \emph{$i$-th binoccular collapse $I_i$ of~$I$} is the
binoccular collapse of~$I_{i-1}$.  The \emph{binoccular closure of
$I$} is the smallest ideal~$\ol I$ containing all binoccular collapses
of~$I$.
\end{defn}

\begin{prop}\label{p:binoccularclosure}
Fix a $P$-coprincipal binomial ideal $I \subset \kk[Q]$ cogenerated by
$w \in Q$.  The binoccular collapse $I_1$ of~$I$ is also a coprincipal
ideal cogenerated by~$w$, and for any binomial $\ttt^a - \lambda\ttt^b
\in I_1$ outside of~$I$, the elements $a$ and~$b$ form a key witness
pair~for~$\til_I$.
\end{prop}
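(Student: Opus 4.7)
The plan is to derive a colon property of $I_1$, use it to prove the key witness pair claim, and then deduce coprincipality. To begin, $I \subseteq I_1$ since binomial generators of $I$ trivially satisfy the defining condition, and $\kk[Q]$-linearity propagates the property $\ttt^p(\cdot) \in I$ from generators to arbitrary elements of $I_1$, giving the colon relation $\ttt^p I_1 \subseteq I$ for every $p \in P$.

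For the key witness pair claim, take a two-term binomial $\ttt^a - \lambda \ttt^b \in I_1 \setminus I$ with $\lambda \ne 0$. The colon relation gives $\ttt^{a+p} - \lambda \ttt^{b+p} \in I$ for each generator $p \in P$, hence $a+p \sim_I b+p$ in $\oQ = Q/\til_I$ (as equal non-nil classes, or both nil); meanwhile $a \not\sim_I b$ and both $a$ and $b$ are non-nil, since otherwise $\ttt^a - \lambda\ttt^b \in I$ follows quickly. All that remains for $(a,b)$ to be a key witness pair of $\til_I$ is the Green's maximality condition on $\ol a$ and $\ol b$.

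To verify maximality, I argue by contradiction: suppose $\ol a \prec \ol b$ strictly in Green's preorder on $\oQ$, so $\ol b = \ol a + \ol c$ for some non-unit $\ol c \in \oQ$. If $\ol c$ is nilpotent (so $\ol c \in P$), then the generator-level relations extend additively to $\ol a + \ol q = \ol b + \ol q$ for every $\ol q \in P$; applying this with $\ol q = \ol c$ gives $\ol a + \ol c = \ol a + 2\ol c = \cdots = \ol a + n\ol c = \nil$ for the nilpotency degree $n$, whence $\ol b = \nil$, contradiction. If instead $\ol c$ is cancellative but not a unit, then rewriting $\ol a + \ol p = \ol a + \ol c + \ol p$ as $(\ol a + \ol p) + \ol 0 = (\ol a + \ol p) + \ol c$ and invoking partial cancellativity of $\ol a + \ol p$ (when it is non-nil) forces $\ol c = 0$; and if every $\ol a + \ol p$ is nil, then $\ol b$ also has only nil $P$-translates, so $\ol b + \ol c$ serves as a strictly larger aide for $\ol b$, making $\ol b$ a cogenerator, which together with $\ol a$ being a cogenerator forces $\ol a$ and $\ol b$ to be Green's equivalent by coprincipality of $\til_I$, contradicting strict inequality. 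So $\ol a$ and $\ol b$ must be Green's equivalent or incomparable, both maximal in $\{\ol a,\ol b\}$, and $(a,b)$ is a key witness pair for $\til_I$.

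Coprincipality of $I_1$ cogenerated by $w$ follows from the preceding analysis: the coarsening from $\til_I$ to $\til_{I_1}$ merges only key witness pairs, so $\til_{I_1}$ remains mesoprimary with the same Green's class of cogenerators, and $w$ persists as a cogenerator because it stays non-nil (it is not merged with the nil class) and its translates $w+p$ remain nil. Maximality of $I_1$ among binomial ideals inducing $\til_{I_1}$ is built into the definition. The chief obstacle is the Green's maximality step, which splits into a clean nilpotent case and a more delicate cancellative-non-unit case that must be resolved by invoking coprincipality of $\til_I$.
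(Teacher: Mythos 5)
Your proof takes a genuinely different route from the paper's.  The paper's proof is a one-line citation: since $I\subseteq I_1$ and every new generator $\ttt^a - \lambda\ttt^b$ of $I_1$ has $a+p\sim_I b+p$ for all $p\in P$, the congruence $\til_{I_1}$ is sandwiched between $\til_I$ and its soccular closure $\ol\til_I$, and then Proposition~\ref{p:coarsening} (soccular collapse is coprincipal and joins only key witnesses outside the Green's class of $w$) is invoked to conclude.  You instead work entirely at the ring/monoid level, deriving the colon property $\ttt^p I_1\subseteq I$ and then redoing the maximality argument from scratch with a case split on whether $\ol c$ is nilpotent or cancellative.  That is essentially re-proving the content of Lemmas~\ref{l:notmax} and~\ref{l:diffcancel} inline rather than citing them through Proposition~\ref{p:coarsening}.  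The benefit of your route is that it is self-contained and makes explicit what the paper elides; the cost is length and some extra delicacy in the sub-cases.

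One step deserves more care than "follows quickly."  You assert that $a\not\sim_I b$ and that neither $a$ nor $b$ is nil.  If $a$ and $b$ both lie in the Green's class of $w$ (so $a\sim_I b$, with $\ttt^a-\mu\ttt^b\in I$ for the correct $\mu$), then for $\lambda\neq\mu$ the binomial $\ttt^a-\lambda\ttt^b$ is \emph{not} in $I$, yet it does satisfy the defining colon condition of $I_1$ since $\ttt^{a+p},\ttt^{b+p}\in I$ for all $p\in P$.  Similarly, if $a$ is nil but $b$ is a non-nil socle element, then $\ttt^a-\lambda\ttt^b$ again lies in $I_1\setminus I$ while $a$ is no witness at all.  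To rule these out you need to observe that Definition~\ref{d:binoccularclosure} is implicitly (as in the soccular collapse, Definition~\ref{d:soccularcollapse}, which explicitly requires $a,b\notin\<w\>$) restricted to binomials whose monomials lie outside the Green's class of $w$ and outside the nil class; under the literal reading these degenerate binomials do appear.  Relatedly, the final sentence asserting that $\til_{I_1}$ is mesoprimary "with the same Green's class of cogenerators" is exactly the content of Proposition~\ref{p:coarsening} and should be cited or argued rather than asserted.
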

\begin{proof}
This follows from Definition~\ref{d:binoccularclosure} and
Proposition~\ref{p:coarsening} since $\til_J$ coarsens $\til_I$ and
refines~$\ol\til_I$.
\end{proof}

\begin{defn}\label{d:binoccularcomponent}
Fix a binomial ideal $I \subset \kk[Q]$, a prime $P \subset Q$, and $w
\in Q$.  The \emph{binoccular component of~$I$ cogenerated by $w$} is
the binoccular closure $\oW_w^P(I)$ of the coprincipal component
$W_w^P(I)$ of~$I$ cogenerated by $w$ along $P$.
\end{defn}

Lemma~\ref{l:intersection} is the core of the original proof of
Theorem~\ref{t:kmessential}, but it was not stated explicitly in these
terms.  This unifying principle is also important as we construct
binoccular decompositions of binomial ideals
(Theorem~\ref{t:binocculardecomp}) and irreducible decompositions of
binomial ideals (Theorem~\ref{t:irreducibledecomp}).

\begin{lemma}\label{l:intersection}
Fix a binomial ideal $I \subset R = \kk[Q]$ and (not necessarily
binomial) ideals $W_1, \ldots, W_r$ containing~$I$.  The following are
equivalent.
\begin{enumerate}[label=\arabic*.,ref=\arabic*]
\item\label{i:intersect}%
$I = W_1 \cap \cdots \cap W_r$.
\item\label{i:quotInj}%
The natural map $R/I \to R/W_1 \oplus \cdots \oplus R/W_r$ is
injective.
\item\label{i:monLocInj}%
The natural map $\soc_P(I) \to R_P/(W_1)_P \oplus \cdots \oplus
R_P/(W_r)_P$ is injective for every monoid prime $P \subset Q$
associated to $\til_I$.
\item\label{i:ringLocInj}%
The natural map
$\soc_\pp(I) \to R_\pp/(W_1)_\pp \oplus \cdots \oplus R_\pp/(W_r)_\pp$
is injective for every prime $\pp \in \ass(I)$.
\end{enumerate}
\end{lemma}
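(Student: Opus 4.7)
The plan is to prove $\ref{i:intersect} \iff \ref{i:quotInj}$ as a trivial reformulation, then $\ref{i:quotInj} \implies \ref{i:monLocInj}$ and $\ref{i:quotInj} \implies \ref{i:ringLocInj}$ by exactness of localization, and finally close the circle via $\ref{i:ringLocInj} \implies \ref{i:quotInj}$ and $\ref{i:monLocInj} \implies \ref{i:ringLocInj}$ through an associated-primes argument.  The first equivalence is immediate: the kernel of the natural map in~\ref{i:quotInj} is $(W_1 \cap \cdots \cap W_r)/I$, which vanishes exactly when~\ref{i:intersect} holds, since $I \subseteq W_i$ for every~$i$.  For $\ref{i:quotInj} \implies \ref{i:monLocInj}$, and similarly for $\ref{i:quotInj} \implies \ref{i:ringLocInj}$, I would localize exactly (at monomials outside $\mm_P$, respectively at~$\pp$) to obtain an injection $R_P/I_P \into \bigoplus_i R_P/(W_i)_P$, and then restrict it to the submodule $\soc_P(I)$.

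The substantive direction $\ref{i:ringLocInj} \implies \ref{i:quotInj}$ proceeds by contrapositive.  If the kernel $K$ of the map in~\ref{i:quotInj} is nonzero, I would choose $\pp \in \ass(K) \subseteq \ass(I)$ together with $\ol f \in K$ whose $R$-annihilator equals~$\pp$.  Then $\ol f/1$ is nonzero in $R_\pp/I_\pp$ (nothing outside $\pp$ kills $\ol f$) and is annihilated by $\pp R_\pp$, hence lies in $\soc_\pp(I)$; meanwhile $\ol f \in K$ forces its image in $\bigoplus_i R_\pp/(W_i)_\pp$ to vanish, contradicting~\ref{i:ringLocInj}.

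For $\ref{i:monLocInj} \implies \ref{i:ringLocInj}$ I would run the same template, after first invoking Theorem~\ref{t:kmprimdecomp}: each $\pp \in \ass(I)$ is minimal over some associated mesoprime $I_q^P$, which forces $\mm_P \subseteq \pp$ and identifies a monoid prime $P$ associated to $\til_I$ below~$\pp$.  Applying~\ref{i:monLocInj} at this $P$ and localizing exactly at $\pp$ yields an injection $(\soc_P(I))_\pp \into \bigoplus_i R_\pp/(W_i)_\pp$.  Restricting along the inclusion $\soc_\pp(I) \subseteq (\soc_P(I))_\pp$---which holds because $\pp R_\pp \supseteq \mm_P R_\pp$ and, since $\mm_P$ is finitely generated, a single element of $R_P \minus \pp R_P$ clears denominators for every generator---produces the required injection for~\ref{i:ringLocInj}.

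The main obstacle I foresee lies precisely in this last step: verifying both that $\soc_\pp(I) \subseteq (\soc_P(I))_\pp$ (which silently uses that the monoid part of~$\pp$ equals $P$, so the localization map $R_P \to R_\pp$ is well-defined and representatives in $\soc_P(I)$ are not prematurely killed) and that the $P$ furnished by Theorem~\ref{t:kmprimdecomp} actually belongs to the class of monoid primes required by~\ref{i:monLocInj}.  Both points depend on the local structure of the mesoprime~$I_q^P$, whose image in $R_P$ is $\mm_P$ together with binomial relations among units of $Q_P$, contributing no monomial content beyond~$\mm_P$.
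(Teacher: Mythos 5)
Your proposal is correct and follows the same route as the paper's proof: the kernel computation for $\ref{i:intersect} \iff \ref{i:quotInj}$, exact localization for $\ref{i:quotInj} \implies \ref{i:monLocInj}$, the socle inclusion $\soc_\pp(I) \subseteq \soc_P(I)_\pp$ via Theorem~\ref{t:kmprimdecomp} for $\ref{i:monLocInj} \implies \ref{i:ringLocInj}$, and an associated-prime/socle argument for $\ref{i:ringLocInj} \implies \ref{i:quotInj}$ (the paper takes an arbitrary nonzero $f$, passes to a prime $\pp$ minimal over $\ann(f)$, and multiplies $f$ into $\soc_\pp(I)$, whereas you pick $\pp \in \ass(K)$ and an $\ol f$ with annihilator exactly $\pp$ --- a functionally identical maneuver). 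The two subtleties you flag are genuine but surmountable exactly as you indicate: finite generation of $\mm_P$ lets one denominator clear the socle condition for all generators, and monomials outside $\mm_P$ are units in $R_P$, hence nonzerodivisors on $R/I_q^P$, hence excluded from every prime $\pp$ minimal over the mesoprime $I_q^P$, which is what the paper asserts tersely with ``every monomial outside of $\mm_P$ also lies outside of $\pp$.''
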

\begin{proof}
The containments $I \subseteq W_1, \ldots, I \subseteq W_r$ induce a
well-defined homomorphism
$$%
  R/I \to R/W_1 \oplus \cdots \oplus R/W_r.
$$
whose kernel is $W_1 \cap \cdots \cap W_r$ modulo~$I$.  Thus
$I = W_1 \cap \cdots \cap W_r$ holds if and only if this map is
injective and therefore \ref{i:intersect}~$\iff$~\ref{i:quotInj}.
Assume the homomorphism just constructed is injective.  Exactness of
localization produces an injective map
$$%
  R_P/I_P \into R_P/(W_1)_P \oplus \cdots \oplus R_P/(W_r)_P
$$
for each monoid prime $P \subset Q$.  This proves
\ref{i:quotInj}~$\implies$~\ref{i:monLocInj}.
Now assume \ref{i:monLocInj} holds and fix a prime $\pp \in \ass(I)$.
By Theorem~\ref{t:kmprimdecomp}, $\pp$ is minimal over some associated
mesoprime of~$I$.  Since $P$ is associated to $\til_I$, the map
$$%
  \soc_P(I) \to R_P/(W_1)_P \oplus \cdots \oplus R_P/(W_r)_P
$$
is injective.  Every monomial outside of~$\mm_P$ also lies
outside of~$\pp$, so by inverting the remaining elements
outside of~$\pp$, we obtain the injection
$$%
  \soc_P(I)_\pp \to R_\pp/(W_1)_\pp \oplus
  \cdots \oplus R_\pp/(W_r)_\pp.
$$
Any element in $\soc_P(I)_\pp$ is annihilated by $\mm_P$, so
$\soc_\pp(I) \subset \soc_P(I)_\pp$, yielding
\ref{i:monLocInj}~$\implies$~\ref{i:ringLocInj}.
Finally, suppose \ref{i:ringLocInj} holds.  Fix a nonzero $f \in R/I$
and a prime $\pp$ minimal over the annihilator of~$f$.  The image
$\ol f \in R_\pp/I_\pp$ of~$f$ is nonzero since $\pp$ contains the
annihilator of~$f$.  Minimality of~$\pp$ implies some power of~$\pp$
annihilates $\ol f$, so $a \ol f$ is annihilated by $\pp$ for some
$a \in \pp$.  By assumption, $a \ol f$ has nonzero image in some
$(R/W_i)_\pp$, meaning $af$ has nonzero image in $R/W_i$.  This proves
\ref{i:ringLocInj}~$\implies$~\ref{i:quotInj}.
\end{proof}

\begin{thm}\label{t:binocculardecomp}
For any binomial ideal $I \subset \kk[Q]$, the intersection of the
binoccular components cogenerated by its essential $I$-witnesses is a
mesoprimary decomposition~of~$I$.
\end{thm}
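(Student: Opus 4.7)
The plan is to split the statement into two assertions: first, that the intersection of binoccular components cogenerated by essential $I$-witnesses equals $I$, and second, that this intersection qualifies as a mesoprimary decomposition in the sense of Definition~\ref{d:kmcoprincdecomp}. The unifying tool for the first is Lemma~\ref{l:intersection}, which reduces an intersection-equals-$I$ check to injectivity of a map on monomial-local socles.

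One containment is immediate: each binoccular component $\oW_w^P(I)$ contains the coprincipal component $W_w^P(I)$, and Theorem~\ref{t:kmessential} already gives $I$ as the intersection of those. For the reverse containment I would invoke Lemma~\ref{l:intersection}.\ref{i:monLocInj}, fix a monoid prime $P$ associated to~$\til_I$ and a nonzero $f \in \soc_P(I)$, and produce an essential $I$-witness~$w$ for~$P$ in whose binoccular component the image of~$f$ is nonzero. The natural choice is to take $\ttt^w$ minimal under Green's preorder among the monomials in~$f$ with nonzero image in $\kk[Q_P]/I_P$; by Definition~\ref{d:mesoprime}.4 this forces $w$ to be an essential $I$-witness for~$P$. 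Reducing $f$ modulo~$(W_w^P(I))_P$ kills every monomial whose exponent is not Green's-above~$w$ (that is the role of~$M_w^P(I)$ in Definition~\ref{d:kmessential}), so the image of~$f$ retains a nonzero $\kk^*$-multiple of~$\ttt^w$.

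The crucial step is to show this scalar multiple of~$\ttt^w$ survives the further passage to $\oW_w^P(I)$. By iterating Proposition~\ref{p:binoccularclosure}, every binomial adjoined in forming $\oW_w^P(I)$ from $W_w^P(I)$ has the shape $\ttt^a - \lambda\ttt^b$ with $(a,b)$ a key witness pair for the ambient coprincipal congruence and neither~$a$ nor~$b$ Green's equivalent to~$w$. Consequently no such relation can identify $\ttt^w$ with zero or with a monomial outside its own Green's class, so $\ttt^w$ survives in $R_P/(\oW_w^P(I))_P$ and $f$ has nonzero image there. This is the step I expect to require the most care: one must unwind the iterative definition of $\oW_w^P(I)$ and verify inductively that each newly adjoined binomial remains supported away from the Green's class of~$w$.

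For the second assertion, I would first verify that each binoccular component $\oW_w^P(I)$ is itself a coprincipal binomial ideal cogenerated by~$w$: iterating Proposition~\ref{p:binoccularclosure} yields an ascending chain of such coprincipal ideals, and Noetherianity of~$\kk[Q]$ stabilizes the chain in finitely many steps, so $\oW_w^P(I)$ is reached after finitely many binoccular collapses. The $P$-mesoprime at~$w$ is determined (Definition~\ref{d:mesoprime}) by the stabilizer $K_w^P$ and the character $\rho$ on it, both of which depend only on the $\kk[G_P]$-action on the class of~$w$; since binoccular collapse alters only relations among elements outside the Green's class of~$w$, the mesoprime of $\oW_w^P(I)$ at~$w$ coincides with that of $W_w^P(I)$ at~$w$, which in turn coincides with that of~$I$ at~$w$ by Theorem~\ref{t:kmessential}. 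Combined with the intersection identity, this yields the claimed mesoprimary decomposition.
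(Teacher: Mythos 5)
Your proof follows essentially the same route as the paper's.  For the assertion that the intersection equals~$I$, both you and the paper invoke Lemma~\ref{l:intersection}, pick a nonzero $f \in \soc_P(I)$, take $\ttt^w$ minimal under Green's preorder among the nonzero monomials of~$f$ (which is exactly the condition in Definition~\ref{d:mesoprime}.4 making $w$ an essential witness), and argue that $f$ survives passage first to $W_w^P(I)_P$ and then to the binoccular closure $\oW_w^P(I)_P$.  One small caution: Proposition~\ref{p:binoccularclosure} literally says only that binomials $\ttt^a - \lambda\ttt^b$ adjoined during collapse have $(a,b)$ a key witness pair for $\til_I$; the further clause you use---that neither $a$ nor~$b$ is Green's equivalent to~$w$---is not part of that proposition's statement, though it does follow from its other conclusion (that the collapse is again coprincipal cogenerated by~$w$) together with Proposition~\ref{p:coarsening}.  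Be explicit about that step.  Also, the cleaner formulation is the paper's: since $\ttt^w$ is minimal among nonzero monomials of~$f$ and $M_w^P(I)$ kills everything not Green's-below~$w$, the image of~$f$ in $\kk[Q_P]/W_w^P(I)_P$ is supported entirely in the Green's class of~$w$, and binoccular collapse leaves that class untouched; this avoids the subtler claim that the $\ttt^w$-coefficient alone controls nonvanishing.

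A genuine point of difference is that your proof addresses the ``is a mesoprimary decomposition'' clause---termination of the collapse via Noetherianity, coprincipality of each $\oW_w^P(I)$, and coincidence of the $P$-mesoprime at~$w$ before and after collapse---whereas the published proof establishes only the identity $I = \bigcap_w \oW_w^P(I)$ and leaves those verifications implicit.  Your supplementary argument (that the mesoprime depends only on the $\kk[G_P]$-action on the class of~$w$, which binoccular collapse does not alter) is correct and a useful addition.
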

\begin{proof}
Fix a monoid prime $P \subset Q$ associated to $\til_I$ and a nonzero
$f \in \soc_P(I)$.  By Lemma~\ref{l:intersection}, it suffices to show
that $f$ is nonzero modulo the localization along $P$ of some
binoccular component.  By Definition~\ref{d:mesoprime}.4, some nonzero
monomial $\lambda \ttt^w$ of~$f$ is an essential $I_P$-witness
for~$P$.  This means every monomial of~$f$ other than $\lambda \ttt^w$
that is nonzero modulo $W_w^P(I)_P$ is Green's equivalent to $w$, so
$f$ has nonzero image in the binoccular closure $\oW_w^P(I)_P$.
\end{proof}

\section{Nonexistence of binomial irreducible decomposition}\label{s:nonexistence}

The only binomials in the socle of a binoccular binomial ideal are
binomials where both terms are monomial cogenerators.  When the
monomial ideal~$\mm_P$ for the associated monoid prime~$P$ is a
maximal ideal in~$\kk[Q]$, this means that in fact the socle has
exactly one binomial, up to scale, namely the unique monomial
cogenerator.  However, even in that case the socle can contain
non-binomial elements, too.

\begin{example}\label{e:nonbinomialdecomp}
Let $I = \<x^2y - xy^2, x^3, y^3\> \subset \kk[x,y]$.  This ideal is
binoccular, and its congruence is depicted in Figure~\ref{fig:ex303}.
The binomial generator forces $x^2y^2 \in I$, so $I$ is cogenerated by
$x^2y$.  The monomials $x^2$, $xy$ and $y^2$ are all non-key
witnesses, and $x^2 + y^2 - xy \in \soc_P(I)$ for $\mm_P = \<x,y\>$.
The expression $I = \<x^2 + y^2 - xy, x^3, y^3\> \cap \<x^3,y\>$ is an
irreducible decomposition of~$I$, and as we shall see in
Theorem~\ref{t:nonbinomialdecomp}, every irreducible decomposition of
$I$ contains some non-binomial irreducible component.
\end{example}

\begin{figure}[tbp]
\begin{center}
\includegraphics[width=1.5in]{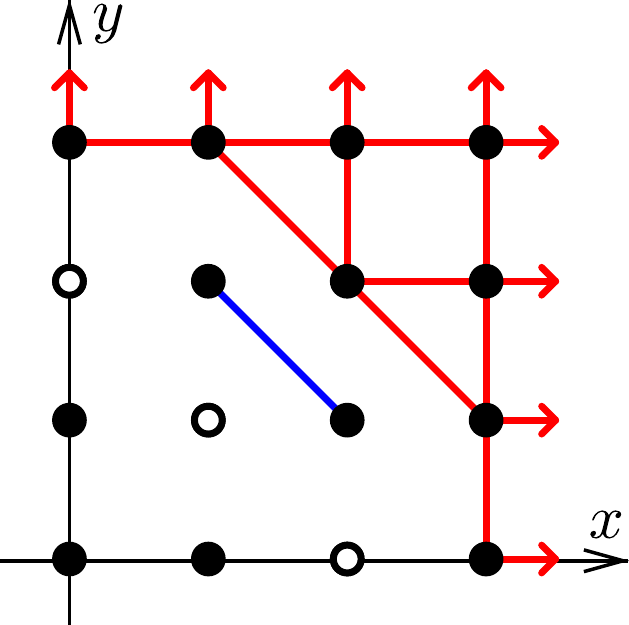}
\end{center}
\caption[A binomial ideal that has no binomial irreducible
decomposition]{The congruence induced by $\<x^2y - xy^2, x^3, y^3\>
\subset \kk[x,y]$ on~$\NN^2$.  The non-binomial element $x^2 + y^2 -
xy$ lies in the socle of~$I$, and as such, $I$ does not admit a
binomial irreducible decomposition.}
\label{fig:ex303}
\end{figure}

Theorem~\ref{t:nonbinomialdecomp} shows that the ideal in
Example~\ref{e:nonbinomialdecomp} cannot be written as the
intersection of binomial irreducible ideals, answering
Question~\ref{q:binomirrdecomp} in the negative.  Its proof uses an
alternative characterization of irreducible ideals in terms of their
socles (Lemma~\ref{l:vasconcelos}).

\begin{defn}\label{d:socle}
Fix an ideal $I$ in a Noetherian ring $R$ and a prime ideal $\pp
\subset R$.  The \emph{$\pp$-socle of~$I$} is
$$%
  \soc_\pp(I) = \{f \in R_\pp/I_\pp \mid
  \pp f = 0\} \subseteq R_\pp/I_\pp.
$$
$I$ has \emph{simple} socle if $\dim_{\kk(\pp)}(\soc_\pp(I)) = 1$,
where $\kk(\pp) = R_\pp/\pp_\pp$ is the residue~field~at~$\pp$.
\end{defn}

\begin{lemma}[{\cite[Proposition~3.1.7]{Vas}}]\label{l:vasconcelos}
The number of components in any irredundant irreducible decomposition
of a $\pp$-primary ideal $I$ in a Noetherian ring $R$ equals
$\dim_{\kk(\pp)}\soc_\pp(I)$.
\end{lemma}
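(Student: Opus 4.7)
The plan is to reduce to the Artinian local setting by localizing at $\pp$, and then sandwich $\dim_{\kk(\pp)}\soc_\pp(I)$ between $n$ and $n$ by means of the canonical injection $R/I \into \bigoplus_{i=1}^n R/J_i$ coming from an irredundant irreducible decomposition $I = J_1 \cap \cdots \cap J_n$. After localizing we may assume $R$ is local with maximal ideal $\mm = \pp_\pp$ and residue field $k = \kk(\pp)$, so that $I$ is $\mm$-primary and $R/I$ is Artinian; since each $J_i$ is also $\pp$-primary, irredundancy is preserved under localization.

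The key preliminary I would recall is the classical characterization: in an Artinian local ring, an ideal $J$ is irreducible if and only if $\dim_k \soc(R/J) = 1$. Indeed, if $s_1, s_2 \in \soc(R/J)$ are linearly independent, then the ideals $ks_1, ks_2$ of $R/J$ intersect in zero, so $(0)$ decomposes nontrivially in $R/J$; conversely, any nontrivial decomposition $(0) = (J_1/J) \cap (J_2/J)$ in an Artinian quotient yields a simple submodule inside each $J_i/J$, and these produce two linearly independent socle elements because the submodules they lie in meet trivially.

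With this in hand the upper bound is immediate: the injection $R/I \into \bigoplus_i R/J_i$ restricts to an injection $\soc(R/I) \into \bigoplus_i \soc(R/J_i)$ whose target has dimension $n$, so $\dim_k \soc(R/I) \le n$. For the lower bound, irredundancy yields $K_i := \bigcap_{j \ne i} J_j \supsetneq I$ for each $i$; since $K_i/I$ is a nonzero submodule of the Artinian module $R/I$, it contains a simple submodule, hence a nonzero element $s_i \in \soc(R/I)$. The element $s_i$ vanishes in $R/J_j$ for $j \ne i$ but cannot vanish in $R/J_i$ (else it would vanish in $R/I$ by injectivity of the diagonal map). Consequently the images of $s_1, \ldots, s_n$ in $\bigoplus_i \soc(R/J_i)$ are supported in distinct coordinates, hence linearly independent, so $\dim_k \soc(R/I) \ge n$.

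The main obstacle is really only the preliminary characterization of irreducibility via simple socle, which is standard in the Artinian local setting; the remainder is diagram chasing with the direct-sum embedding together with the basic fact that any nonzero submodule of an Artinian module contains a simple submodule, which is automatically annihilated by $\mm$ and therefore lies in the socle.
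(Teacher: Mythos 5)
The paper does not prove this lemma at all; it simply cites Vasconcelos~\cite[Proposition~3.1.7]{Vas}, so there is no internal proof to compare yours against. Your argument is a correct and essentially standard self-contained proof. The reduction to the Artinian local case is justified: all components $J_i$ of an irredundant irreducible decomposition of a $\pp$-primary $I$ are themselves $\pp$-primary (group the $J_i$ by radical to get an irredundant primary decomposition, whose set of radicals must be $\ass(R/I) = \{\pp\}$ by the first uniqueness theorem), and localization at $\pp$ then preserves the irredundant intersection. Your preliminary characterization --- that a proper ideal $J$ of an Artinian local ring is irreducible iff $\dim_k \soc(R/J) = 1$ --- is argued correctly in both directions, and the sandwich argument using the diagonal embedding $\soc(R/I) \into \bigoplus_i \soc(R/J_i)$ for the upper bound and the nonzero submodules $K_i/I$ with $K_i = \bigcap_{j\ne i} J_j$ for the lower bound is exactly the standard route. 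The only thing worth flagging is that you assert rather than justify the fact that the $J_i$ are all $\pp$-primary; as written the proof is fine but a careful reader might want that one sentence spelled out.
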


\begin{thm}\label{t:nonbinomialdecomp}
The ideal $I = \<x^2y - xy^2, x^3, y^3\> \subset \kk[x,y]$ cannot be
expressed as an intersection of binomial irreducible ideals.
\end{thm}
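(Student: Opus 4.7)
The plan is to show that any binomial, $\mm$-primary, irreducible ideal $J \supseteq I$ must contain $x^2 y$, which contradicts a hypothetical decomposition $I = J_1 \cap \cdots \cap J_r$ into binomial irreducible ideals since $x^2 y \notin I$. The reduction to the $\mm$-primary case is automatic: $I$ contains $\mm^4$, so $\sqrt{I} = \mm$, and any primary ideal containing $I$ has maximal radical $\mm$.

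The heart is a congruence-level argument. Fix $J$ as above, and suppose $x^2 y \notin J$. Since $J \supseteq I$ is binomial, $\til_J$ coarsens $\til_I$, whose non-nil classes are the singletons $\{1\}, \{x\}, \{y\}, \{x^2\}, \{xy\}, \{y^2\}$ and the pair $\{x^2 y, xy^2\}$. I would verify that every way to coarsen $\til_I$ further---merging two of these classes, or sending any one into the nil class---forces $x^2 y \in J$, by multiplying a hypothetical defining binomial by $x$ or $y$ and reducing using $x^2 y - xy^2 \in I$ together with $x^3, y^3 \in I$. For example, $x^2 - \lambda xy \in J$ combined with $x^2 y - xy^2 \in J$ yields $(1-\lambda) xy^2 \in J$ upon multiplying by $y$; if $\lambda \ne 1$ this gives $xy^2 \in J$ and hence $x^2 y \in J$, while if $\lambda = 1$ multiplying instead by $x$ gives $x^3 - x^2 y \in J$, which with $x^3 \in I$ again forces $x^2 y \in J$. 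Every other coarsening falls to an analogous one-line computation. Hence $\til_J = \til_I$; since the scalar in the unique non-trivial binomial relation $x^2 y \sim xy^2$ is pinned to $1$ by $I \subseteq J$, and $J$ is generated by its binomials, we conclude $J = I$. But $I$ is not irreducible: $x^2 y$ and $x^2 - xy + y^2$ are linearly independent elements of $\soc_\mm(I)$ (the latter is annihilated by $\mm$ modulo $I$ by a direct check), so Lemma~\ref{l:vasconcelos} gives $I$ at least two irreducible components, contradicting irreducibility of $J$.

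Therefore $x^2 y \in J_i$ for each $i$, whence $x^2 y \in \bigcap_i J_i = I$. But $R/I$ is graded with one-dimensional degree-$3$ component spanned by the nonzero class of $x^2 y$, so $x^2 y \notin I$, a contradiction. The main obstacle is the exhaustive case analysis of coarsenings of $\til_I$; each individual case is a trivial one-line computation, but one must not miss merges involving the low-degree classes $\{1\}, \{x\}, \{y\}$, nor the possibility that $\{x^2 y, xy^2\}$ itself merges with another non-nil class (which also forces $x^2 y \in J$ by the same method).
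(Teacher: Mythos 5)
Your proof is correct, and it takes a genuinely different route from the paper's. The paper leverages Lemma~\ref{l:vasconcelos} to conclude that any irredundant irreducible decomposition of~$I$ has exactly two components $I_1 \cap I_2$, then uses the injectivity criterion of Lemma~\ref{l:intersection} on socles to pin down one component as $I_2 = I + \langle f\rangle$ with $f = x^2 + y^2 - xy + \lambda x^2y$, and leaves implicit the (easy) observation that $I + \langle f\rangle$ has no binomial generating set. Your argument instead shows directly, by exhausting the ways $\til_J$ can coarsen $\til_I$, that every binomial $\mm$-primary irreducible $J \supseteq I$ must contain $x^2y$; since $x^2y \notin I$, no intersection of such ideals can equal $I$. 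You use Lemma~\ref{l:vasconcelos} only in the soft direction (irreducible implies simple socle) to rule out $J = I$, and you never need Lemma~\ref{l:intersection} at all. The tradeoff is clear: the paper's proof is shorter and more structural, pushing the work into the preceding lemmas, whereas yours is longer but completely elementary and self-contained, at the cost of a finite (and genuinely necessary) case analysis of congruence coarsenings. One small remark: your reduction step correctly notes that every $J_i$ in a hypothetical decomposition is automatically $\mm$-primary since $J_i \supseteq I$ forces $\sqrt{J_i} \supseteq \mm$ and $J_i \ne (1)$; this is worth spelling out, as the case analysis is stated only for such $J$. The case analysis itself and the pinning of the coefficient in $x^2y - c\,xy^2$ to $c=1$ (via $(1-c)xy^2 \in J$) are both sound and complete.
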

\begin{proof}
Let $\mm_P = \<x,y\>$.  The $\kk$-vector space $\soc_P(I)$ is spanned
by $\alpha = x^2 + y^2 - xy$ and $\beta = x^2y$.  Since
$\dim_\kk(\soc_P(I)) = 2$ and $\kk = \kk(\mm_P)$, any irredundant
irreducible decomposition of~$I$ has exactly 2 components by
Lemma~\ref{l:vasconcelos}.  Suppose $I = I_1 \cap I_2$ with $I_1$
and~$I_2$ irreducible.  The equivalence of parts~\ref{i:intersect}
and~\ref{i:quotInj} in Lemma~\ref{l:intersection} implies that the
natural map $\kk[x,y]/I \to \kk[x,y]/I_1 \oplus \kk[x,y]/I_2$ induces
an injection $\soc_{\mm_P}(I) \into \soc_{\mm_P}(I_1) \oplus
\soc_{\mm_P}(I_2)$ which is an isomorphism for dimension reasons.
Possibly exchanging $I_1$ and~$I_2$, assume $f = \alpha +
\lambda\beta$ spans $\soc_{\mm_P}(I_1)$ for some $\lambda \in \kk$.
This implies $f \in I_2$ and $\soc_{\mm_P}(I + \<f\>) = \soc_{\mm_P}
(I_2)$, the latter by an explicit, elementary calculation.
Lemma~\ref{l:intersection} yields $I_2 = I + \<f\>$.
\end{proof}

Example~\ref{e:nonbinomialdecomp} is the first example of a binomial
ideal that does not admit a binomial irreducible decomposition.
However, it is still possible to construct a (not necessarily
binomial) irreducible decomposition from essentially combinatorial
data, as Corollary~\ref{c:irreducibledecomp} demonstrates.

Example~\ref{e:binomialdecompproblem} exhibits the difficulties in
determining whether or not a given binomial ideal admits a binomial
irreducible decomposition.  This question is closely connected with
understanding which components in a coprincipal decomposition are
redundant.

\begin{example}\label{e:binomialdecompproblem}
Consider the two ideals $I = \<x^2y - xy^2, x^4 - x^3y, xy^3 - y^4,
x^5\>$ and $J = \<x^4y - x^3y^2, x^2y^3 - xy^4, x^6 - x^5y, xy^5 -
y^6, x^7\>$, whose respective congruences are depicted in
Figure~\ref{fig:ex304}.
\begin{figure}[tbp]
$$%
  \begin{array}{ccc}
  \includegraphics[width=1.5in]{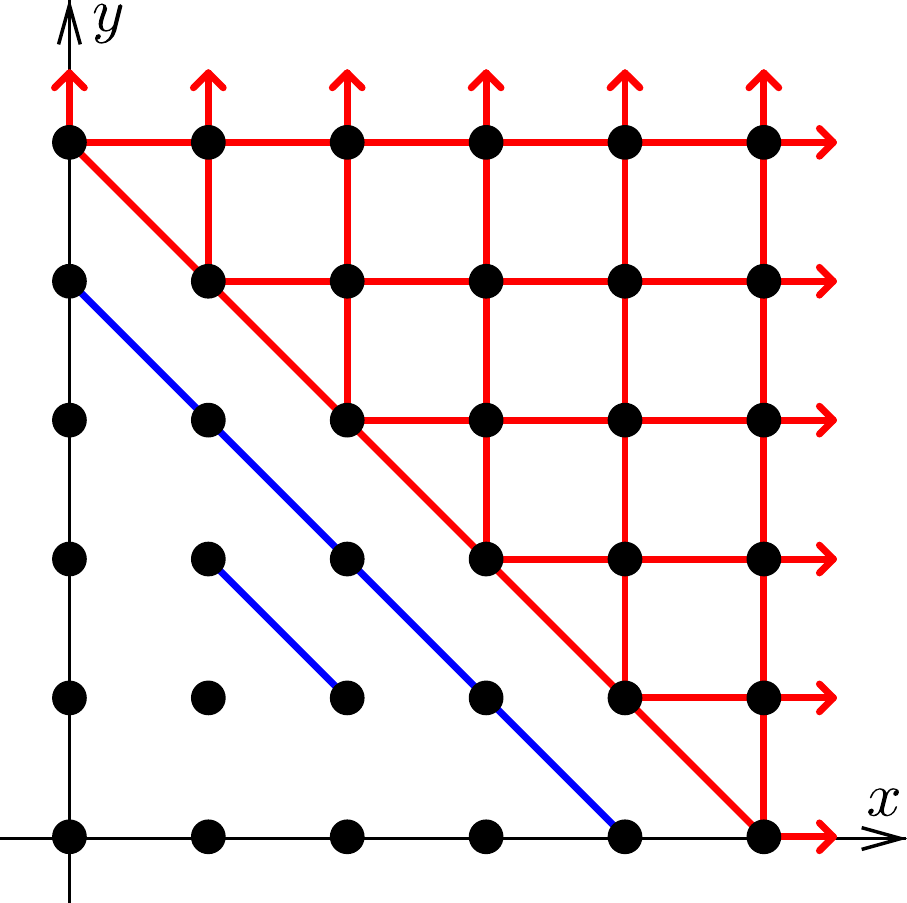}
  & \hspace{3em} &
  \includegraphics[width=1.5in]{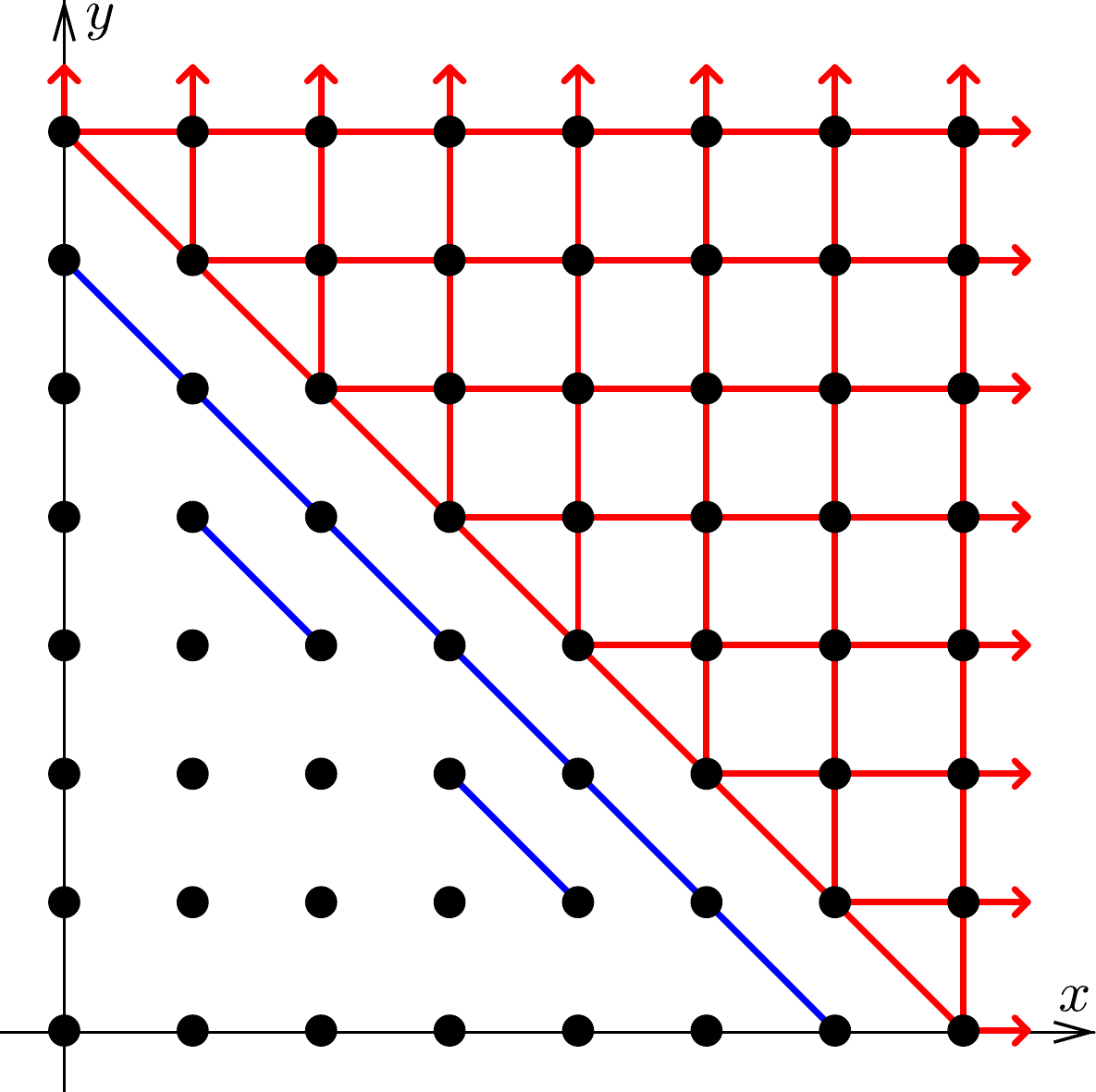}
  \\
  I &  & J
  \end{array}
$$
\caption[Two binomial ideals with soccular components containing
non-binomial socle elements]{The congruences induced by $I = \<x^2y -
xy^2, x^4 - x^3y, xy^3 - y^4, x^5\>$ (left) and $J = \<x^4y - x^3y^2,
x^2y^3 - xy^4, x^6 - x^5y, xy^5 - y^6, x^7\>$ (right) on $\NN^2$.  $I$
admits a binomial irreducible decomposition, but $J$ does not.}%
\label{fig:ex304}
\end{figure}
The ideal $I$ has three key witnesses aside from its cogenerator, and
the binoccular decomposition produced in
Theorem~\ref{t:binocculardecomp} has a component at each of these key
witnesses.  Any one of these three can be omitted, and omitting the
component cogenerated by $x^2y$ yields a binomial irreducible
decomposition of~$I$.  In contrast, $J$ has four non-maximal key
witnesses, two of which cogenerate binoccular components that fail to
admit binomial irreducible decompositions.  Since only one can be
omitted, $J$ does not admit a binomial irreducible decomposition.
\end{example}

\begin{prob}\label{p:futbinirrconstruct}
Determine when all of the binoccular components without simple socle
can be omitted from the decomposition in
Theorem~\ref{t:binocculardecomp}.
\end{prob}

\begin{question}\label{q:futbinirrexist}
Which binomial ideals admit binomial irreducible decompositions?  
\end{question}

Question~\ref{q:futbinirrexist} is more general than
Problem~\ref{p:futbinirrconstruct} but may involve primary
decompositions that do not arise from mesoprimary decomposition.

\section{Irreducible decomposition of binomial ideals}\label{s:irredecomp}

This section produces an irreducible decomposition of any given
binomial ideal.  We first define the irreducible closure of a
coprincipal binomial ideal (Definition~\ref{d:irreducibleclosure}).
Unlike a binoccular closure (Definition~\ref{d:binoccularclosure}),
which may have non-binomial elements in its socle, the cogenerators of
coprincipal binomial ideals are the only socle elements that survive
irreducible closure.

\begin{defn}\label{d:irreducibleclosure}
For a $P$-coprincipal binomial ideal $I \subset \kk[Q]$ cogenerated by
$w \in Q$,~set $R_P = \kk[Q_P]/I_P$ and let $G_P \subset Q_P$ denote
the group of units.  Write $\ow^\perp$ for the unique graded
$\kk$-vector subspace of~$R_P$ such that
$R_P = (\kk[G_P] \cdot \ttt^\ow) \oplus \ow^\perp$. Let
$\ow_\infty^\perp$ denote the largest $\kk[Q_P]$-submodule of $R_P$
that lies entirely in $\ow^\perp$ and set
$\oR_P = R_P/\ol w_\infty^\perp$.  The \emph{irreducible closure}
of~$I$ is the ideal $\irr(I) = \ker(\kk[Q] \to \oR_P)$.
\end{defn}

\begin{example}\label{e:irreducibleclosure}
Let $I = \<x^2y - xy^2, x^3, y^3, z^3\>$ and $\mm_P =
\<x,y,z\>$.  Then $z^2(x^2 + y^2 - xy)$ lies in $\soc_P(I)$ and thus
generates a $\kk[x,y,z]$-submodule of $(x^2yz^2)^\perp$.  On the other
hand, the element $z(x^2 + y^2 - xy)$ lies in $\soc_P(\<z^2(x^2 + y^2
- xy)\> + I)$ but outside of $\soc_P(I)$.  Continuing yields the
irreducible closure $\irr(I) = \<x^2 + y^2 - xy\> + I$ of~$I$.
\end{example}

Recall the usual notion of essentiality from commutative algebra: a
submodule $N$ of a module $M$ is \emph{essential} if $N$ intersects
every nonzero submodule of~$M$ nontrivially.

\begin{lemma}\label{l:essentialperp}
If $I \subset \kk[Q]$ is a $P$-coprincipal binomial ideal with
monomial cogenerator~$\ttt^w$, then $\<\ttt^\ow\> = \kk[G_P] \cdot
\ttt^\ow$ is an essential\/ $\kk[Q_P]$-submodule of~$\oR_P$ that is
isomorphic to a Gorenstein quotient of\/~$\kk[G_P]$.
\end{lemma}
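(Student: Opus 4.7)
The proof splits into two parts: essentiality of $\kk[G_P]\cdot\ttt^\ow$ in $\oR_P$, and identification of $\kk[G_P]\cdot\ttt^\ow$ as a Gorenstein quotient of $\kk[G_P]$. The preliminary observation is that, since $w$ is a cogenerator, $\mm_P\ttt^\ow = 0$ in $R_P$, so $\kk[G_P]\cdot\ttt^\ow$ is automatically closed under multiplication by $\kk[Q_P]$; moreover $\ow^\perp$, being the sum of the graded pieces of $R_P$ indexed by Green's classes in $\oQ_P$ other than that of $\ow$, is $\kk[G_P]$-invariant. Therefore the direct sum $R_P = \kk[G_P]\cdot\ttt^\ow \oplus \ow^\perp$ descends to $\oR_P = \kk[G_P]\cdot\ttt^\ow \oplus (\ow^\perp/\ow_\infty^\perp)$ as $\kk$-vector spaces, with $\kk[G_P]\cdot\ttt^\ow$ still a $\kk[Q_P]$-submodule.

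For essentiality, my plan is to first prove $\soc_{\mm_P}(\oR_P) = \kk[G_P]\cdot\ttt^\ow$. Given $v = a + b \in \soc_{\mm_P}(\oR_P)$ with $a \in \kk[G_P]\cdot\ttt^\ow$ and $b \in \ow^\perp/\ow_\infty^\perp$, combining $\mm_P v = 0$ with $\mm_P a = 0$ forces $\mm_P b = 0$; lifting $b$ to $\widetilde b \in \ow^\perp$, the submodule $\kk[Q_P]\widetilde b = \kk[G_P]\widetilde b + \mm_P\widetilde b$ lies inside $\ow^\perp$ since $\kk[G_P]$ preserves $\ow^\perp$ and $\mm_P \widetilde b \subset \ow_\infty^\perp \subset \ow^\perp$, so maximality of $\ow_\infty^\perp$ forces $\widetilde b \in \ow_\infty^\perp$ and hence $b = 0$. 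Then, because $I$ is mesoprimary it contains all nil-class monomials of $\til_I$ by maximality, and combining this with the $P$-primary condition and Noetherianity of $Q$ supplies some $N$ with $\mm_P^N \subset I_P$, so that $\mm_P^N \oR_P = 0$. For any nonzero $v \in \oR_P$, the largest $k$ with $\mm_P^k v \ne 0$ then supplies a nonzero element of $\kk[Q_P]v \cap \soc_{\mm_P}(\oR_P) = \kk[Q_P]v \cap \kk[G_P]\cdot\ttt^\ow$, giving essentiality.

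For the Gorenstein claim, cyclicity of $\kk[G_P]\cdot\ttt^\ow$ as a $\kk[G_P]$-module identifies it with $\kk[G_P]/J$ for $J = \mathrm{ann}_{\kk[G_P]}(\ttt^\ow)$, and the mesoprime data of Definition~\ref{d:mesoprime} give $J = \<\ttt^u - \rho(u) \mid u \in K_w^P\>$ for the character $\rho \colon K_w^P \to \kk^*$. Picking a $\mathbb{Z}$-basis of a free complement to the torsion of $K_w^P$, the associated $\ttt^u - \rho(u)$ form a regular sequence in the Laurent-polynomial part of $\kk[G_P]$, while the torsion part contributes a finite commutative Frobenius algebra; hence $\kk[G_P]/J$ is a tensor product of a complete intersection and a Frobenius algebra inside the Gorenstein ring $\kk[G_P]$, and is therefore Gorenstein. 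The step I expect to require most care is the socle identification $\soc_{\mm_P}(\oR_P) = \kk[G_P]\cdot\ttt^\ow$, since that is the one place where the defining maximality of $\ow_\infty^\perp$ is used in a nontrivial way; everything else is Green's-grading bookkeeping and standard regular-sequence arguments.
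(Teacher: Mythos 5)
Your essentiality argument is correct and runs along the same lines as the paper's: the paper asserts that $\soc_P(\oR_P)$ is essential because $\mm_P$ is nilpotent on $\oR_P$ and that $\langle\ttt^\ow\rangle=\soc_P(\oR_P)$ "by construction"; your detailed verification of that socle identity (lifting $b$, noting $\kk[Q_P]\widetilde b=\kk[G_P]\widetilde b+\mm_P\widetilde b\subset\ow^\perp$, and invoking maximality of $\ow_\infty^\perp$) is a correct and welcome expansion of what the paper leaves implicit.

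The Gorenstein argument is where you diverge from the paper, and your version has a genuine gap. The paper's proof extends scalars to $\ol\kk$ faithfully flatly and cites \cite[Theorem~2.1(b)]{ES96} to conclude that $J\ol\kk[G_P]$ is generated by a regular sequence, then descends the Gorenstein property. You instead try to argue directly over $\kk$ by splitting $K_w^P$ into free plus torsion, but the claim that "the associated $\ttt^u-\rho(u)$ form a regular sequence in the Laurent-polynomial part of $\kk[G_P]$" is not well posed: a $\ZZ$-basis of the free complement in $K_w^P$ need not land in the free summand of $G_P$, so those binomials need not lie in any Laurent-polynomial subalgebra of $\kk[G_P]$. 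More seriously, the asserted decomposition of $\kk[G_P]/J$ as "a tensor product of a complete intersection and a Frobenius algebra" is not established and is generally false: $\kk[G_P]/J$ is a commutative twisted group algebra of $G_P/K_w^P$, and the symmetric $2$-cocycle governing the twist can mix the free and torsion parts of $G_P/K_w^P$, so no tensor decomposition is visible over $\kk$; moreover this quotient typically has positive Krull dimension, so describing its "torsion contribution" as a finite Frobenius algebra is misleading. The reason the paper passes to $\ol\kk$ is precisely to avoid these issues: over an algebraically closed field the partial character $\rho$ extends to all of $G_P$ (divisibility of $\ol\kk^*$), so after a unit rescaling of monomials $J\ol\kk[G_P]$ becomes the kernel of $\ol\kk[G_P]\to\ol\kk[G_P/K_w^P]$, and the complete-intersection/Gorenstein structure is then transparent, after which Gorensteinness descends along $\kk\to\ol\kk$. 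Your route could in principle be repaired (e.g., by arguing via the twisted group algebra description and flat descent directly), but as written the regular-sequence-plus-Frobenius-tensor step does not hold up.
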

\begin{proof}
The equality $\<\ttt^\ow\> = \kk[G_P] \cdot \ttt^\ow$ follows because
$\ttt^\ow$ is annihilated by $\mm_P$.  The Gorenstein condition
\cite[Appendix~A.7]{Vas} holds because the kernel of the surjection
$\kk[G_P] \to \<\ttt^\ow\>$ is, after faithfully flat extension to an
algebraically closed coefficient field, generated by a binomial
regular sequence \cite[Theorem~2.1(b)]{ES96}.  To prove essentiality,
first note that $\soc_P(\oR_P)$ is an essential submodule of~$\oR_P$
because $\mm_P$ is nilpotent on~$\oR_P$, and then note that
$\<\ttt^\ow\> = \soc_P(\oR_P)$ by construction of~$\oR_P$.
\end{proof}

\begin{prop}\label{p:assperp}
Fix a $P$-coprincipal binomial ideal $I \subset \kk[Q]$ with monomial
cogenerator $\ttt^w$.  The associated primes of~$R_P$, $\oR_P$, and
$R_P/\mm_P$ coincide and are all~minimal.
\end{prop}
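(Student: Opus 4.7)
The plan is to reduce all three associated-prime computations to the single Gorenstein ring $\kk[G_P]/I_\rho$, whose associated primes automatically coincide with its minimal primes. The starting point is that since $\til_I$ is $P$-primary, $\mm_P$ is nilpotent on $R_P$, so every associated prime of $R_P$, $\oR_P$, or $R_P/\mm_P$ contains $\mm_P$ and corresponds, via the identification $\kk[Q_P]/\mm_P = \kk[G_P]$, to a prime of $\kk[G_P]$.

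For $R_P/\mm_P R_P$, I would observe that killing $\mm_P$ absorbs every monomial and every non-unit binomial in $I_P$, leaving exactly the unit binomials, which by Definition~\ref{d:mesoprime} generate $I_\rho$; hence $R_P/\mm_P R_P \cong \kk[G_P]/I_\rho$. By Lemma~\ref{l:essentialperp} this ring is Gorenstein, hence Cohen-Macaulay and unmixed, so $\ass(R_P/\mm_P)$ equals the set of minimal primes of $I_\rho$. For $\oR_P$, Lemma~\ref{l:essentialperp} further exhibits $\<\ttt^\ow\> \cong \kk[G_P]/I_\rho$ as an essential $\kk[Q_P]$-submodule, and essentiality equates the associated primes of a submodule with those of the ambient module, so $\ass(\oR_P) = \ass(\<\ttt^\ow\>)$ is the same set. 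For $R_P$ itself, I would invoke Proposition~\ref{p:kmprimarydecomp} applied to the coprincipal (hence mesoprimary) ideal $I_P$: it identifies $\ass(R_P)$ with the minimal primes of the associated mesoprime $(I_\ow^P)_P = I_\rho + \mm_P$, which under the correspondence above is once again the set of minimal primes of $I_\rho$ in $\kk[G_P]$.

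The main obstacle is essentially bookkeeping: verifying the identification $I_P + \mm_P = I_\rho + \mm_P$ from the structural definitions (using, among other things, that a unit sitting in the nil class of $\til_I$ would force $1 \in I_P$, so such units cannot occur), and confirming that the bijection between primes of $\kk[Q_P]$ containing $\mm_P$ and primes of $\kk[G_P]$ respects both associatedness and minimality. Once these identifications are nailed down, the Gorenstein property of $\kk[G_P]/I_\rho$, combined with essentiality of $\<\ttt^\ow\>$ in $\oR_P$, forces all three associated-prime sets to agree with the minimal primes of $I_\rho$, which are minimal by construction.
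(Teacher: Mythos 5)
Your proof is correct and follows essentially the same route as the paper: invoke Proposition~\ref{p:kmprimarydecomp} to handle $\ass(R_P)$, and invoke Lemma~\ref{l:essentialperp} (essentiality of $\<\ttt^\ow\> \cong \kk[G_P]/I_\rho$ inside $\oR_P$, together with the Gorenstein property) to handle $\ass(\oR_P)$ and $\ass(R_P/\mm_P)$. The paper's proof is terser and leaves the identification $R_P/\mm_P \cong \<\ttt^\ow\> \cong \kk[G_P]/I_\rho$ and the ``essential submodules share associated primes'' step implicit, but the content is the same; your version just makes these bookkeeping points explicit.
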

\begin{proof}
The associated primes of~$R_P$ and $R_P/\mm_P$ coincide by
Proposition~\ref{p:kmprimarydecomp}.  But
$R_P/\mm_P \cong \<\ttt^\ow\> \subseteq R_P$ maps isomorphically to an
essential submodule of $\oR_P$ by Lemma~\ref{l:essentialperp}, so all
three sets of associated primes coincide.
\end{proof}

Compare the next result to the coprincipal special case of
Proposition~\ref{p:kmprimarydecomp}.

\begin{thm}\label{t:irrclosureprimarydecomp}
The irreducible closure $\irr(I)$ of any coprincipal ideal $I$ has a
unique minimal primary decomposition.  Every primary component therein
is irreducible.
\end{thm}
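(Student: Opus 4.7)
The plan is to exploit the chain of embeddings
\[
  \<\ttt^\ow\> \into \oR_P \hookleftarrow \kk[Q]/\irr(I),
\]
transferring the absence of embedded primes and the one-dimensionality of the socle from the essential Gorenstein submodule $\<\ttt^\ow\>$ of $\oR_P$ down to $\irr(I)$.

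First, note that $\kk[Q]/\irr(I)$ embeds in $\oR_P$ as a $\kk[Q]$-module by the very definition of $\irr(I)$, and that $\oR_P$ is obtained from $\kk[Q]/\irr(I)$ by inverting the monomials outside $\mm_P$. Consequently, for any prime $\pp \subset \kk[Q]$ that is the contraction of some $\pp' \subset \kk[Q_P]$---which includes every prime associated to $\kk[Q]/\irr(I)$---the inverted monomials already avoid~$\pp$, so localizing at~$\pp$ yields an isomorphism $(\kk[Q]/\irr(I))_\pp = (\oR_P)_{\pp'}$. Proposition~\ref{p:assperp} guarantees that every associated prime of $\oR_P$ over $\kk[Q_P]$ is minimal, and the standard bijection between primes of $\kk[Q_P]$ and primes of $\kk[Q]$ disjoint from the inverted monomials sends associated primes of $\oR_P$ bijectively to those of $\kk[Q]/\irr(I)$. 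Hence $\irr(I)$ has no embedded primes in $\kk[Q]$ and so admits a unique minimal primary decomposition $\irr(I) = \bigcap_\pp J_\pp$ indexed by its minimal primes, establishing the uniqueness claim.

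To show each $\pp$-primary component $J_\pp$ is irreducible, by Lemma~\ref{l:vasconcelos} it suffices to verify $\dim_{\kk(\pp)} \soc_\pp(J_\pp) = 1$; since $\pp$ is minimal over $\irr(I)$, this reduces to showing $\dim_{\kk(\pp)} \soc_\pp(\irr(I)) = 1$. Setting $\pp' = \pp\kk[Q_P]$, the identification above realizes $\soc_\pp(\irr(I))$ as the $\pp'$-socle of the Artinian local ring $(\oR_P)_{\pp'}$. Because $\pp'$-annihilation implies $\mm_P$-annihilation, and because formation of the $\mm_P$-socle commutes with localization on the Noetherian module $\oR_P$, this $\pp'$-socle coincides with the $\pp'$-socle of $(\<\ttt^\ow\>)_{\pp'}$. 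By Lemma~\ref{l:essentialperp}, the latter is the socle of an Artinian Gorenstein local ring---namely the localization of the Gorenstein quotient $\kk[G_P]/J \cong \<\ttt^\ow\>$ at the minimal prime $\pp'/\mm_P$---which is one-dimensional over its residue field $\kk(\pp)$.

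The main obstacle is the bookkeeping across the three interlocking module structures over $\kk[Q]$, $\kk[Q_P]$, and $\kk[G_P]$: the Gorenstein property from Lemma~\ref{l:essentialperp} is an abstract ring-theoretic statement about $\kk[G_P]/J$, and what requires care is verifying that it translates precisely into one-dimensionality of $\soc_\pp(\irr(I))$ for each associated prime $\pp$ of $\irr(I)$ in~$\kk[Q]$.
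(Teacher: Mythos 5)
Your proof is correct and follows the same overall strategy as the paper, built on the same ingredients: Proposition~\ref{p:assperp} for minimality of associated primes (hence uniqueness of the primary decomposition), Lemma~\ref{l:essentialperp} for the Gorenstein property of $\<\ttt^\ow\>$, and Lemma~\ref{l:vasconcelos} to pass from simple socle to irreducibility.

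The one place you genuinely diverge is in how you pass the simple-socle property of $\<\ttt^\ow\>_{\pp'}$ up to $(\oR_P)_{\pp'}$. The paper's proof invokes the abstract fact that ordinary localization preserves essential submodules, citing Bass, and then uses that an essential submodule with simple socle forces simple socle in the ambient module. You instead identify the two socles directly: since $\mm_P \subset \pp'$, the $\pp'$-socle of $(\oR_P)_{\pp'}$ lands inside the $\mm_P$-socle, which (because socles of finitely generated modules over Noetherian rings commute with localization) is $(\soc_P(\oR_P))_{\pp'} = (\<\ttt^\ow\>)_{\pp'}$, and containment in the other direction is automatic. This bypasses the external reference to Bass and is slightly more self-contained; the paper's route is shorter to write and is the basis for Remark~\ref{r:Gor}'s general principle about embeddings into Gorenstein localizations, which your more hands-on computation would not so readily abstract. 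Both are valid; they agree on all the substantive points.
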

\begin{proof}
Minimality of all associated primes in Proposition~\ref{p:assperp}
implies the first statement.  Since localization preserves
essentiality \cite[Corollary~1.3]{Bass62}, the ordinary
localization~$\<\ttt^\ow\>_\pp$ at the prime $\kk[Q]$-ideal $\pp$ is
an essential submodule of~$(\oR_P)_\pp$ for every
$\pp \in \ass(\irr(I))$ by Lemma~\ref{l:essentialperp}.  The same
lemma implies that $\<\ttt^\ow\>_\pp$ is Gorenstein of dimension~$0$,
so $\<\ttt^\ow\>_\pp$ has simple socle.  Thus the quotient by
$\irr(I)_\pp$ has simple socle, whence $\irr(I)_\pp$ is irreducible by
Lemma~\ref{l:vasconcelos}.
\end{proof}

\begin{remark}\label{r:Gor}
The proof of Theorem~\ref{t:irrclosureprimarydecomp} via
Lemma~\ref{l:essentialperp} and Proposition~\ref{p:assperp} shows,
quite generally, that if a Noetherian ring is contained in a
localization that has an essential submodule isomorphic to a
Gorenstein ring, then the original ring has a unique minimal primary
decomposition all of whose components are quotients modulo irreducible
ideals.
\end{remark}

We now extend Theorem~\ref{t:binocculardecomp} to irreducible closures
before stating Corollary~\ref{c:irreducibledecomp}, our main result
for this section.

\begin{thm}\label{t:irreducibledecomp}
Every binomial ideal $I \subset \kk[Q]$ equals the intersection of the
irreducible closures of the coprincipal components cogenerated by its
essential witnesses.
\end{thm}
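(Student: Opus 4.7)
The plan is to adapt the argument used for Theorem~\ref{t:binocculardecomp}, replacing each binoccular closure by its irreducible closure, and then invoke the Gorenstein essentiality captured by Lemma~\ref{l:essentialperp} to push the argument through. Since $I\subseteq W_w^P(I)\subseteq\irr(W_w^P(I))$ for every essential witness $w$, the inclusion $I\subseteq\bigcap_w\irr(W_w^P(I))$ is immediate, so by Lemma~\ref{l:intersection} it suffices to show that for each monoid prime $P\subset Q$ associated to $\til_I$, the natural map
$$
  \soc_P(I) \too \bigoplus_w \kk[Q_P]/\irr(W_w^P(I))_P
$$
is injective, the sum being indexed by essential $I$-witnesses for~$P$.

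Fix a nonzero $f\in\soc_P(I)$; the plan is to exhibit a single essential witness $w$ for which $f$ has nonzero image in $\kk[Q_P]/\irr(W_w^P(I))_P$. As in the proof of Theorem~\ref{t:binocculardecomp}, I would invoke Definition~\ref{d:mesoprime}.4 to choose $w$ so that $\lambda\ttt^w$ is a nonzero monomial of $f$ (viewed in $\kk[Q_P]/I_P$) whose exponent is Green's-minimal among the nonzero monomials of~$f$.  Write $S_w = \kk[Q_P]/W_w^P(I)_P$ and $\ol S_w = S_w/\ow_\infty^\perp$, so that $\irr(W_w^P(I))_P$ is the kernel of the surjection $\kk[Q_P]\onto\ol S_w$; hence it suffices to show that the image $\ol f\in S_w$ does not lie in $\ow_\infty^\perp$.

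The key structural claim is that $\ol f$ lies in the submodule $\<\ttt^\ow\> = \kk[G_P]\cdot\ttt^\ow$ of the graded decomposition $S_w = \<\ttt^\ow\>\oplus\ow^\perp$. Indeed, $M_w^P(I)$ annihilates every $\ttt^u$ with $u\nmid w$ in $Q_P$, so each surviving monomial of~$f$ in $S_w$ divides~$w$; by the minimality of~$w$, each such exponent must be Green's-equivalent to~$w$ in $Q_P/\til_I$ and hence in the coprincipal quotient $Q_P/\til_{W_w^P(I)}$, landing its class in the $\kk[G_P]\cdot\ttt^\ow$ summand. From the proof of Theorem~\ref{t:binocculardecomp} applied to this very choice of~$w$, the image of~$f$ is nonzero modulo $\oW_w^P(I)_P$, and since $W_w^P(I)\subseteq\oW_w^P(I)$ this upgrades to $\ol f\ne 0$ in $S_w$ itself. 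Being a nonzero element of $\<\ttt^\ow\>$ and using $\<\ttt^\ow\>\cap\ow^\perp = 0$, we deduce $\ol f\notin\ow^\perp\supseteq\ow_\infty^\perp$, so $\ol f$ descends to a nonzero element of $\ol S_w$, giving $f\notin\irr(W_w^P(I))_P$ as required.

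The main obstacle will be the structural claim that $\ol f\in\<\ttt^\ow\>$: translating ``$u$ divides $w$ in $Q_P$'' (the condition governing $M_w^P(I)$) into ``$\ol u$ is Green's-equivalent to $\ow$ in the coprincipal quotient'', and then verifying that the $I_\rho$ relations built into $W_w^P(I)$ are consistent with placing each surviving monomial of~$f$ into the Gorenstein summand $\kk[G_P]\cdot\ttt^\ow$ rather than letting it leak into~$\ow^\perp$, requires careful coordination between the $\til_I$-grading, the coarser $\til_{W_w^P(I)}$-grading, and the stabilizer~$K_w^P$. Once that identification is in place, the remainder is a routine combination of Lemma~\ref{l:intersection}, the already-proved Theorem~\ref{t:binocculardecomp}, and the essentiality of $\<\ttt^\ow\>$ inside $\ol S_w$ guaranteed by Lemma~\ref{l:essentialperp}.
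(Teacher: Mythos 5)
Your proposal is correct and follows essentially the same route as the paper's proof: reduce via Lemma~\ref{l:intersection} to showing that a nonzero $f \in \soc_P(I)$ survives modulo a single component, take $w$ Green's-minimal among the monomials of~$f$ (so $w$ is an essential $I$-witness by Definition~\ref{d:mesoprime}.4), and argue that the surviving image of~$f$ is a nonzero element of $\<\ttt^\ow\> = \kk[G_P]\cdot\ttt^\ow$, hence avoids $\ow^\perp \supseteq \ow_\infty^\perp$ and so is nonzero in $\oR_P$.

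The ``main obstacle'' you flag is not actually an obstacle: the structural claim is precisely what the paper asserts (rather tersely), and your sketched justification is the right one. A monomial $\ttt^u$ of $f$ surviving in $\kk[Q_P]/W_w^P(I)_P$ must in particular avoid $M_w^P(I)$, so $w \in \<u\> \subset Q_P$, whence $\ol u \preceq \ol w$ in $Q_P/\til_I$; Green's-minimality of $\ol w$ among the nonzero monomials of $f$ forces $\ol u$ and $\ol w$ into the same Green's class, which persists into the coarser quotient $Q_P/\til_{W_w^P(I)}$. Since $\kk[G_P]\cdot\ttt^\ow \oplus \ow^\perp$ is the $Q_P/\til_{W_w^P(I)}$-graded decomposition and $\kk[G_P]\cdot\ttt^\ow$ is exactly the span of the graded pieces in the Green's class of $\ow$, no coordination with $I_\rho$ or $K_w^P$ is needed beyond this --- the relations in $I_\rho$ are already quotiented out, and they respect the grading. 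Your invocation of Theorem~\ref{t:binocculardecomp} together with $W_w^P(I) \subseteq \oW_w^P(I)$ to certify $\ol f \ne 0$ in $S_w$ is a nice explicit touch; the paper leaves that nonvanishing step implicit.
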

\begin{proof}
Fix a monoid prime $P \subset Q$ and nonzero $f \in \soc_P(I)$.  By
Definition~\ref{d:mesoprime}.4, some nonzero monomial $\lambda \ttt^w$
of $f$ is an essential $I_P$-witness for $P$.  Every monomial of~$f$
that is nonzero modulo $\irr(W_w^P(I))_P$ lies in the submodule
$\<\ttt^\ol w\>$ of~$\kk[Q]_P/\irr(W_w^P(I))_P$, so $f$ is nonzero
modulo $\irr(W_w^P(I))_P$.$\!$  Lemma~\ref{l:intersection} completes the
proof.
\end{proof}

\begin{cor}\label{c:irreducibledecomp}
Fix a binomial ideal $I \subset \kk[Q]$.  An irreducible decomposition
of~$I$ results by intersecting the canonical primary components of the
irreducible closures of the coprincipal components cogenerated by the
essential $I$-witnesses.
\end{cor}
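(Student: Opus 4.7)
The plan is to obtain the irreducible decomposition in two stages, each supplied by a theorem already proved above. First, I would invoke Theorem~\ref{t:irreducibledecomp} to write
\[
  I \;=\; \bigcap_{(w,P)} \irr\bigl(W_w^P(I)\bigr),
\]
where the intersection ranges over all pairs $(w,P)$ with $P \subset Q$ a monoid prime and $w \in Q$ an essential $I$-witness for~$P$. This reduces the problem to decomposing each irreducible closure $\irr(W_w^P(I))$ into irreducible ideals.

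Second, I would apply Theorem~\ref{t:irrclosureprimarydecomp} to each coprincipal component $W_w^P(I)$. That theorem produces a canonical minimal primary decomposition of $\irr(W_w^P(I))$ whose components are all irreducible. Substituting each such decomposition into the intersection above yields a finite expression
\[
  I \;=\; \bigcap_{(w,P)} \,\bigcap_{j} Q_{w,P,j},
\]
in which every $Q_{w,P,j}$ is an irreducible ideal of $\kk[Q]$. By definition this is an irreducible decomposition of~$I$, establishing the corollary.

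There is essentially no new obstacle: all the genuine work has been absorbed into Theorem~\ref{t:irreducibledecomp} (which shows that irreducible closures do not overshoot the original ideal upon intersection) and Theorem~\ref{t:irrclosureprimarydecomp} (which uses the Gorenstein essential-submodule argument of Lemma~\ref{l:essentialperp} and Proposition~\ref{p:assperp} to guarantee that the canonical primary components of each irreducible closure are in fact irreducible). The only step that warrants a sentence is the observation that combining a finite intersection of ideals with finite primary decompositions of each summand yields a finite intersection overall, so the resulting expression qualifies as an irreducible decomposition in the Noetherian sense. No further verification is needed.
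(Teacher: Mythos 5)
Your proof is correct and takes exactly the same route as the paper: the paper's proof is the two-line instruction ``Apply Theorem~\ref{t:irreducibledecomp}, then Theorem~\ref{t:irrclosureprimarydecomp},'' which is precisely the two-stage argument you spell out.
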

\begin{proof}
Apply Theorem~\ref{t:irreducibledecomp}, then
Theorem~\ref{t:irrclosureprimarydecomp}.
\end{proof}


\end{document}